\documentclass[letterpaper,12pt,reqno]{amsart}
\usepackage{graphicx,amsmath,amssymb,amsthm}
\usepackage[left=1in,right=1in, top=1in, bottom=1in]{geometry}
\newtheorem{thm}{Theorem}[section]

\newtheorem{prop}[thm]{Proposition}
\newtheorem{thrm}[thm]{Theorem}
\newtheorem{lem}[thm]{Lemma}

\newtheorem{rmk}[thm]{Remark}
\newtheorem{conj}[thm]{Conjecture}

\newcommand{\QQ}{\mathbb{Q}}

\newcommand{\NN}{\mathbb{N}}
\newcommand{\ZZ}{\mathbb{Z}}
\newcommand{\FF}{\mathbb{F}}
\newcommand{\C}{{\mathcal{C}}}

\newcommand{\Z}{\mathbb{Z}}

\newcommand{\cp}{C_p^{(\ell)}(a,f,n)}
\newcommand{\cpf}{C_p^{(\ell)}(1,f,1)}

\newcommand{\sumv}{\underset{\substack{f_1,f_2\leq V }}{{\sum }'}}
\newcommand{\sumfs}{\underset{\substack{f_1,f_2\geq 1 }}{{\sum }'}}
\makeatletter
\def\imod#1{\allowbreak\mkern7mu({\operator@font mod}\,\,#1)}
\makeatother

\numberwithin{equation}{section}

\title[An asymptotic for the average number of amicable pairs for elliptic curves]{An asymptotic for the average number of amicable pairs for elliptic curves}
\author{James Parks \\ \\ \textbf{\lowercase{with an appendix by} S\lowercase{umit} G\lowercase{iri}}}
\address{Department of Mathematics and Computer Science, University of Lethbridge,\newline
	\rule[0ex]{0ex}{0ex}\hspace{8pt}  4401 University Drive, Lethbridge, AB, T1K 3M4, Canada\newline
	\rule[0ex]{0ex}{0ex}\hspace{8pt} \textit{Present address}: Institut f\"ur Algebra, Zahlentheorie und Diskrete Mathematik, \newline
	\rule[0ex]{0ex}{0ex}\hspace{8pt} Leibniz Universit\"at Hannover, Welfengarten 1, 30167 Hannover, Germany}
\email{parks@math.uni-hannover.de}
\thanks{This work was supported by a Pacific Institute for the Mathematical Sciences Postdoctoral Fellowship.}

\date{\today}
\begin{document}

\begin{abstract}
Amicable pairs for a fixed elliptic curve defined over $\QQ$ were first considered by Silverman and Stange where they conjectured an order of magnitude for the function that counts such amicable pairs. This was later refined by Jones to give a precise asymptotic constant. The author previously proved an upper bound for the average number of amicable pairs over the family of all elliptic curves. In this paper we improve this result to an asymptotic for the average number of amicable pairs for a family of elliptic curves.
\end{abstract}

\maketitle

\section{Introduction}
Let $E$ be an elliptic curve defined over $\QQ$. For a prime $p$, let $a_p(E)$ denote the trace of the Frobenius automorphism. 
Silverman and Stange \cite{JSKS:1} define a pair $(p,q)$ of prime numbers with $p<q$ to be an \textit{amicable pair} of $E$ 
if $E$ has good reduction at both $p$ and $q$ and 
\begin{equation}
\#{E}_{p}(\FF_{p}):=p+1-a_p(E_{p})=q \quad {\rm and}\quad \#{E}_{q}(\FF_q)=p.\label{amicon}
\end{equation}
As observed in \cite[Remark 1.5]{JSKS:1} amicable pairs arose naturally when Silverman and Stange
generalized Smyth's \cite{CS:1} results on index divisibility of Lucas sequences  to elliptic divisibility sequences.

We are interested in the distribution of amicable pairs for a fixed elliptic curve $E/\QQ$. We first define the amicable pair counting function, originally considered by Silverman and Stange \cite{JSKS:1},
\begin{equation*}
\pi_{E,2}(X):=\#\{p\leq X : \#E_p(\FF_p)=q \text{ is prime and }\#E_q(\FF_q)=p\}.
\end{equation*}
They used a heuristic argument to give the following conjecture for the behavior of $\pi_{E,2}(X)$.
\begin{conj}[\textbf{Silverman-Stange}] \label{silverstangeamconj}
Let $E/\QQ$ be an elliptic curve. Assume that there are infinitely many primes $p$ such that 
$\#{E}_{p}(\FF_{p})$ is prime. Then as $X\rightarrow\infty$ we have that
\begin{align*}
&\pi_{E,2}(X) \asymp  \frac{\sqrt{X}}{(\log X)^2} \quad \text{ if }E\text{ does not have complex multiplication (CM)}\\
\text{and}\qquad&\\
&\pi_{E,2}(X)\sim  A_E\frac{X}{(\log X)^2} \quad \text{ if }E\text{ has CM},
\end{align*}
where the implied constants in $\asymp$ are both positive and depend only on $E$ and $A_E$ is a precise positive constant.
\end{conj}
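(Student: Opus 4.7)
Since \textbf{Conjecture \ref{silverstangeamconj}} is a heuristic prediction rather than a theorem, the proposal I would follow is the Silverman--Stange probabilistic argument, combining the Koblitz conjecture with a Lang--Trotter input (or, in the CM case, with an algebraic identity that replaces Lang--Trotter). The first reduction is to rewrite the amicable condition \eqref{amicon}: if $q:=p+1-a_p(E)$ is prime, then $\#E_q(\FF_q)=p$ is equivalent to
\begin{equation*}
a_q(E) \;=\; q+1-p \;=\; 2 - a_p(E),
\end{equation*}
so $\pi_{E,2}(X)$ counts primes $p\leq X$ satisfying (i) $q=p+1-a_p(E)$ is prime and (ii) $a_q(E)=2-a_p(E)$.

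In the non-CM case I would treat (i) and (ii) as heuristically independent given $a_p(E)=t$. Condition (i) is a Koblitz-type event: a ``random'' integer of size $p$ in the relevant congruence classes should be prime with probability $\sim C_E / \log p$. Condition (ii), conditional on $q\approx p$ being prime, demands that $a_q(E)$ hit the single target value $2-t$; by Lang--Trotter combined with the Sato--Tate semicircle law on $[-2\sqrt{q},\,2\sqrt{q}]$, this single-point density is of order $1/\sqrt{p}$. Summing over primes,
\begin{equation*}
\pi_{E,2}(X) \;\asymp\; \sum_{p\leq X}\frac{1}{\log p}\cdot\frac{1}{\sqrt{p}} \;\asymp\; \int_2^X \frac{dx}{\sqrt{x}(\log x)^2} \;\asymp\; \frac{\sqrt{X}}{(\log X)^2},
\end{equation*}
which reproduces the conjectured order of magnitude.

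For a CM curve with endomorphism algebra equal to an imaginary quadratic field $K$, condition (ii) becomes automatic. Writing $p=\pi\bar\pi$ with $\pi\in\mathcal{O}_K$ and $a_p=\pi+\bar\pi$, one computes
\begin{equation*}
q \;=\; p+1-a_p \;=\; (1-\pi)(1-\bar\pi),
\end{equation*}
so whenever $q$ is a rational prime, $1-\pi$ is a Frobenius lift at $q$ and hence $a_q(E)=2-a_p(E)$ holds identically. The amicable count therefore reduces to a pure CM Koblitz count, giving the sharper asymptotic $\pi_{E,2}(X)\sim A_E\, X/(\log X)^2$; the constant $A_E$ is assembled from local densities modulo small prime powers together with the global splitting condition for $p$ in $K$.

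The chief obstacle is that both the Koblitz and Lang--Trotter conjectures are themselves open, and the independence heuristic used to combine them in the non-CM case requires justification via a joint model for the $\ell$-adic Galois image of $E$. This is also what forces the conjecture to predict only an order of magnitude in the non-CM regime but a true asymptotic with a precise constant in the CM regime: the CM algebra collapses condition (ii) into condition (i), removing the Lang--Trotter input entirely and leaving a single, cleanly computable density. Presumably the paper will sidestep these obstructions by averaging over a family of curves, where the dependencies between ``$q$ prime'' and ``$a_q$ prescribed'' wash out.
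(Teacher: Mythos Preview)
The paper does not prove this statement --- it is recorded as Conjecture~\ref{silverstangeamconj} and attributed to Silverman--Stange \cite{JSKS:1}, with no heuristic derivation reproduced beyond the citation. What you have written is essentially the Silverman--Stange heuristic, and as a heuristic it is sound.

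One correction in the CM paragraph: from $q=(1-\pi)(1-\bar\pi)$ you can only conclude that the Frobenius at $q$ is $u(1-\pi)$ for some unit $u\in\mathcal{O}_K^\times$, so in general $a_q(E)=\pm(2-a_p(E))$ rather than $a_q(E)=2-a_p(E)$ identically; condition~(ii) is therefore not \emph{automatic} but is satisfied on a set of positive density determined by the Gr\"ossencharacter. Silverman--Stange carry out this sign analysis and show it is governed by congruence conditions on $p$, so the order $X/(\log X)^2$ survives and the sign folds into the constant $A_E$. Your closing remark --- that the present paper sidesteps the Koblitz and Lang--Trotter obstructions by averaging over the family $\C(A,B)$ --- is exactly what happens.
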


\begin{rmk}
{\em (i) Silverman and Stange \cite{JSKS:1} also defined an $L$-tuple $(p_1,\ldots,p_L)$ of distinct prime numbers to be an \textit{aliquot cycle} of length $L\geq 2$ of $E$ 
if $E$ has good reduction at each prime $p_i$ and  $$\#{E}_{p_i}(\FF_{p_i})=p_{i+1}  \text{ for } 1\leq i\leq L-1 \text{ and } \#{E}_{p_L}(\FF_{p_L})=p_1.$$ They also introduced the analogously defined aliquot cycle counting function $\pi_{E,L}(X)$, and gave a conjecture for its behavior in the non-CM case. The main focus of their work was on aliquot cycles in the CM-case, where they proved that there are only finitely many under certain conditions when $L\geq 3$ as well as showing that more structure occurs in the case $L=2$.}
\end{rmk}

Jones \cite{NJ:1} refined Conjecture \ref{silverstangeamconj} in the non-CM case for aliquot cycles by using a heuristic argument similar to that of Lang and Trotter \cite{SLHT:1}. We state the refined conjecture in the particular case of amicable pairs below.
 
\begin{conj}[\textbf{Jones}]\label{jonesconj}
Let $E/\QQ$ be an elliptic curve without complex multiplication. Then there is a non-negative real constant $C_{E,2}\geq 0$ such that, as $X\rightarrow\infty$, we have that
\begin{equation*}
\pi_{E,2}(X) \sim C_{E,2}\int_2^X\frac{1}{2\sqrt{t}(\log t)^2}dt.
\end{equation*}
\end{conj}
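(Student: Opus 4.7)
The plan is to make rigorous the Lang--Trotter style heuristic that motivated Jones in the first place. Write
\[
\pi_{E,2}(X) = \sum_{\substack{p \leq X \\ E \text{ good at } p}} \mathbf{1}\bigl[\#E_p(\FF_p) = q \text{ is prime and } \#E_q(\FF_q) = p\bigr],
\]
and think of this as a correlation sum: one needs the joint conditions $a_p(E) = p+1-q$ at $p$ together with $a_q(E) = q+1-p$ at the dependent prime $q = p+1-a_p(E)$. The target integrand $1/(2\sqrt{t}(\log t)^2)$ factors as a Sato--Tate weight $1/\sqrt{t}$, a primality weight $1/\log t$ for $p$, a primality weight $1/\log t$ for $q$ (whose size is $\sim p$), and a second $1/\sqrt{t}$ absorbed into the Sato--Tate density at the matching trace.

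\textbf{Steps.} First, for each modulus $m$, impose the congruence conditions $a_p \equiv t_p$ and $a_q \equiv t_q \imod{m}$ consistent with the amicable relation. These conditions cut out a subset $H_m$ of $\mathrm{GL}_2(\ZZ/m\ZZ)\times \mathrm{GL}_2(\ZZ/m\ZZ)$ via the mod-$m$ Galois representation attached to $E$. Second, apply an effective joint Chebotarev density theorem over the compositum of the two division fields to count prime pairs $(p,q)$ with Frobenius in $H_m$, so that the local density is governed by $|H_m|/|\mathrm{GL}_2(\ZZ/m\ZZ)|^2$. Third, let $m\to\infty$: by Serre's open image theorem the local factors converge to an Euler product that, combined with the archimedean Sato--Tate factor at the prescribed trace value $t\sim \sqrt{p}$, defines the constant $C_{E,2}$. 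Fourth, sum over admissible $(p,q)$ and convert the sum into the integral $\int_2^X 1/(2\sqrt{t}(\log t)^2)\,dt$ by partial summation.

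\textbf{Main obstacle.} The decisive difficulty is that this argument requires simultaneous pointwise control over $a_p(E)$ and $a_q(E)$ when $q$ is determined by $p$. Already the single-variable Lang--Trotter conjecture, which asks for the count of $p\leq X$ with a prescribed trace $a_p(E) = t$, is wide open; here one needs a two-variable refinement in which the second prime is tied to the first by a sieve constraint. Even the best effective Chebotarev statements, such as those available under GRH, are far too weak to detect a single prescribed trace value of size $\sqrt{p}$, let alone a pair. Consequently Conjecture \ref{jonesconj} is presently out of reach of direct methods; a realistic attack would either condition on strong quantitative forms of Sato--Tate and Lang--Trotter, or replace the fixed curve $E$ by an average over a family, where the inner character sums encoding the Frobenius conditions can be opened and evaluated on average. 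The latter strategy is exactly what the present paper pursues to obtain an unconditional asymptotic in the averaged setting.
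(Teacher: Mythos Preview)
The statement you are asked to prove is labelled a \emph{Conjecture} in the paper, and the paper does not prove it; indeed, the text says explicitly that ``there are currently no techniques known to approach conjectures like Conjecture~\ref{jonesconj} for a single elliptic curve.'' So there is no ``paper's own proof'' to compare against, and any honest proposal must end where yours does: with an acknowledgment that the problem is open.

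Your write-up is a fair account of the Lang--Trotter style heuristic that motivates the conjecture, and your diagnosis of the obstruction is accurate: one would need pointwise control of $a_p(E)$ at a prescribed value of size $\sqrt{p}$, together with the same at the dependent prime $q$, and already the single-prime version is the open Lang--Trotter conjecture. But you should be clear that what you have written is not a proof and cannot be completed into one with present tools; the ``Steps'' section reads as though the argument goes through up to a technical point, when in fact Step~2 (an effective joint Chebotarev theorem strong enough to detect a single trace value) is itself conjectural and well beyond GRH. The honest summary is that the conjecture is supported by a probabilistic model and by the average results proved in this paper, but remains unproved for any individual non-CM curve.
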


We note that the conjectured asymptotic is consistent with Conjecture \ref{silverstangeamconj}. Moreover, Jones gave an explicit expression for the constant $C_{E,2}$ in terms of invariants of the elliptic curve $E$. We discuss this constant in greater detail below.

For a non-zero integer $n$, we denote the $n$-torsion subgroup of $E$ by $E[n]$. Let 
$\QQ(E[n])$ be the field generated by adjoining to $\QQ$ the 
$x$ and $y$-coordinates of the $n$-torsion points of $E$. We have that $E[n]\cong \ZZ/n\ZZ \times \ZZ/n\ZZ$ for $n\geq 2$.
Since each element of the Galois group  ${\rm Gal}(\bar{\QQ}/\QQ)$ acts on $E[n]$ we have that ${\rm Gal}(\QQ(E[n])/\QQ)\subseteq {\rm GL}_2(\ZZ/n\ZZ)$ (see \cite[Chapter III.7]{JS:1}).

If $[{\rm GL}_2(\ZZ/n\ZZ):{\rm Gal}(\QQ(E[n])/\QQ)]\leq 2$ for each $n\geq 1$ then $E$ is called a \textit{Serre} curve (see \cite[pp. 309-311]{JPS:1} and \cite[p. 51]{SLHT:1}). 
Jones \cite{NJ:1} has shown that for any Serre curve $E$ the conjectural constant $C_{E,2}$ is positive and $C_{E,2}=C_2\cdot f_2(\Delta_{sf}(E))$, where $\Delta_{sf}(E)$ 
denotes the square-free part of the discriminant of any Weierstrass model of $E$ and $f_2$ is a positive function which approaches 1 
as $\Delta_{sf}(E)\rightarrow \infty$. In particular, Jones \cite{NJ:1} gave the formula 
\begin{equation}
\label{eqn: matrix counts}C_2=\frac{8}{3\pi^2}\lim_{k\rightarrow \infty}\frac{n_k^2\cdot\#\Big\{(g_1,g_2) \in \text{GL}_2(\ZZ/n_k\ZZ)^2 : \begin{array}{l}\det(g_2)\equiv \det(g_1)+1-\text{tr}(g_1)\imod{n_k} \\ \det(g_1)\equiv \det(g_2)+1-\text{tr}(g_2)\imod{n_k}\end{array}\Big\}}{|\text{GL}_2(\ZZ/n_k\ZZ)|^2},
\end{equation}
where $n_k=\prod_{\ell \leq k}\ell^k.$

There are currently no techniques known to approach conjectures like Conjecture \ref{jonesconj} for a single elliptic curve. A much more tractable problem is to consider the behaviour of $\pi_{E,2}(X)$ averaged over a family of elliptic curves. This approach has been used successfully to address many other problems related to distributions of invariants of elliptic curves. The most well known is the Lang-Trotter Conjecture \cite{SLHT:1} which counts the number of primes $p\leq X$ such that $a_p(E)=t$ for a fixed integer $t$. The Lang-Trotter Conjecture was first shown to hold on average over a family of elliptic curves in the case $t=0$ by Fouvry and Murty \cite[Thoerem 6]{EFMM:1} and then extended to the case of nonzero integers by David and Pappalardi \cite{CDFP:1}.

We let $a$ and $b$ be integers and let $E_{a,b}$ be the elliptic curve given by the Weierstrass equation 
$$E_{a,b}: y^2=x^3+ax+b,$$ with discriminant $\Delta(E_{a,b})\neq 0$. For $A,B>0$ we consider the two parameter family of elliptic curves as 
\begin{equation} \label{ecfamily}
\C:=\C(A,B)=\{E_{a,b}: |a|\leq A, |b|\leq B, \Delta(E_{a,b})\neq 0\}.
\end{equation}

Conjecture \ref{jonesconj} was considered on average in \cite{JP:1} over the family $\C(A,B)$ in $\eqref{ecfamily}$ where the conjectured upper bound for the average number of aliquot cycles with small bounds on the size of $A$ and $B$ was obtained (cf. \cite[Theorem 1.6]{JP:1}). In this paper we extend this result to an asymptotic on average in the particular case of amicable pairs. We first state this result given in terms of a sum over primes.

\begin{thrm} \label{maintheorem}
	Let $\epsilon >0$, let $E/\QQ$ be an elliptic curve, and let $\C$ be the family of elliptic curves in $\eqref{ecfamily}$ with
	$$A,B>X^{\epsilon} \quad {\rm and} \quad X^3(\log X)^6< AB <e^{X^{\frac{1}{6}-\epsilon}}.$$ Then we have that
	\begin{equation*}  
	\frac{1}{|\C|}\sum_{E\in \C}\pi_{E,2}(X) = \frac{8}{3\pi^2}\sum_{p\leq X}\frac{C_2(p)}{\sqrt{p}\log p} + O\left(\frac{\sqrt{X}}{(\log X)^{2+\epsilon}}\right),
	\end{equation*}
	where
	\begin{align}
	C_2(p):=&\frac{4}{9}\prod_{\ell>2}\Bigg(1-\frac{(2\ell^4+3\ell^3)\left(\frac{p-1}{\ell}\right)^2+\ell^3\left(\frac{p}{\ell}\right)-\ell^4+2\ell^3+4\ell^2-1}{(\ell^2-1)^3}\Bigg).\label{actualkzerop}
	\end{align}
\end{thrm}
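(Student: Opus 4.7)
The plan is to adapt the Fouvry--Murty and David--Pappalardi method for averaging Lang--Trotter type conjectures over the family $\C(A,B)$, in the form developed for aliquot cycles in \cite{JP:1}, and to sharpen the upper bound of that paper into a true asymptotic. I would first interchange summations to obtain
\[
\sum_{E\in\C}\pi_{E,2}(X) = \sum_{p\leq X}\ \sum_{\substack{q\text{ prime}\\ |q-(p+1)|\leq 2\sqrt{p}}} \#\bigl\{E\in\C : a_p(E)=p+1-q,\ a_q(E)=q+1-p\bigr\},
\]
reducing the problem to a joint Frobenius trace count modulo two distinct primes $p$ and $q$. For fixed $p\neq q$ and integers $s,t$ in the respective Hasse intervals, I would evaluate this count by combining Deuring's theorem (which expresses the number of $(a,b)\bmod p$ with $a_p(E_{a,b})=s$ in terms of the Hurwitz--Kronecker class number $H(s^2-4p)$) with the Chinese Remainder Theorem modulo $pq$ and a standard lattice-point count in the box $[-A,A]\times[-B,B]$. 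The hypotheses $A,B>X^{\varepsilon}$ and $AB>X^3(\log X)^6$ are precisely what is needed for the lattice count to be accurate to main order, yielding a contribution of shape $\tfrac{AB(p-1)(q-1)}{(pq)^2}H(s^2-4p)H(t^2-4q)$.

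Next, substituting $s=p+1-q$ and $t=-s$, the problem reduces, for each $p\leq X$, to evaluating a weighted sum of class-number products $H(s^2-4p)H(s^2-4q)$ over $s$ in the Hasse window, with the additional constraint that $q=p+1-s$ be prime. To extract the main term, I would perform a local analysis at each prime $\ell\neq p$, paralleling Jones's limit of matrix counts in $\mathrm{GL}_2(\ZZ/\ell^k\ZZ)$ from \eqref{eqn: matrix counts}; this is what produces the Euler-product factor $C_2(p)$ of \eqref{actualkzerop} and the archimedean constant $\tfrac{8}{3\pi^2}$. A sieve argument handling the primality of $q=p+1-s$ supplies the factor $1/\log p$, while the length $\sim\sqrt{p}$ of the Hasse window combined with the $1/p^2$ factor from the CRT density yields the remaining $1/\sqrt{p}$, matching the stated main term $\tfrac{8}{3\pi^2}\sum_{p\leq X}C_2(p)/(\sqrt{p}\log p)$.

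The main obstacle will be the error analysis. The upper bound $AB<e^{X^{1/6-\varepsilon}}$ is needed so that the CRT lattice error does not swamp the main term when two congruences of modulus up to $\sim X^2$ are imposed simultaneously; the lower bound $AB>X^3(\log X)^6$ ensures that enough lattice points survive a density-$1/(pq)^2$ constraint; and $A,B>X^{\varepsilon}$ rules out pathologically thin boxes. The truly delicate step is extracting a saving of $\log X$ over the Brun--Titchmarsh-level bound that is used for the primality condition on $q=p+1-s$, since it is exactly this saving that promotes the upper bound of \cite[Theorem 1.6]{JP:1} to the asymptotic stated here. I expect this to be the technical content of the appendix by Giri, providing a sharp short-interval asymptotic (with a power-saving error term) for a class-number-weighted count of primes $q$ in the Hasse window of $p$, averaged over $p\leq X$.
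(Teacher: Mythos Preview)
Your opening reduction is correct and is exactly Theorem~2.8 (taken from \cite{JP:1}): the average collapses to
\[
\sum_{p\leq X}\frac{1}{p}\sum_{p^-<q<p^+}\frac{H(D(p,q))^2}{q},
\]
where $D(p,q)=(p+1-q)^2-4p=D(q,p)$, so the two class numbers coincide and one gets a square, not a product of two different $H$'s.

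The real gap is in how you propose to evaluate the inner sum. You suggest a ``local analysis paralleling Jones's matrix counts'' together with ``a sieve argument'' to produce the $1/\log p$. That is not the mechanism, and I do not see how to make it work: Jones's limits of $\mathrm{GL}_2$-counts describe the conjectural constant for a fixed curve, not an analytic identity that one can plug into a sum over $q$. What the paper actually does is apply the analytic class number formula to write $H(D(p,q))$ in terms of $L(1,\chi_{D(p,q)/f^2})$, truncate these $L$-values at height $(\log p)^{O(1)}$ using the Granville--Soundararajan/Elliott input (Proposition~2.1), expand the truncated product as a short Dirichlet polynomial, and then use quadratic reciprocity to turn $\chi_{D(p,q)/f^2}(n)$ into a congruence condition on $q$ modulo $4nf^2$. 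This converts the inner sum into counts of primes $q$ in short intervals $(X_k,X_k+Y]$ of length $Y\asymp\sqrt{p}/(\log p)^{\gamma+5}$ lying in arithmetic progressions of modulus up to $p^{\varepsilon}$. The factor $1/\log p$ then comes from the prime number theorem in these progressions, and the Euler product $C_2(p)$ emerges from a David--Smith style computation of the local sums $C_{p,f}(\ell^{\alpha_1},\ell^{\alpha_2})$ (Lemmas~4.3 and~4.4), not from $\mathrm{GL}_2$ matrix counts.

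Correspondingly, the error term is not handled by Brun--Titchmarsh but by a Bombieri--Vinogradov theorem in short intervals (Lemma~2.2, due to Koukoulopoulos): one needs level of distribution for primes in intervals of length $\sqrt{p}/(\log p)^{O(1)}$ to moduli $\leq p^{\varepsilon}$, averaged over $p\leq X$, and this is exactly what turns the upper bound of \cite{JP:1} into an asymptotic. Finally, you have misread the role of Giri's appendix: it has nothing to do with the error analysis of Theorem~\ref{maintheorem}. It averages the already-obtained constant $C_2(p)$ over $p\leq X$ via Siegel--Walfisz to deduce Theorem~1.5 from Theorem~\ref{maintheorem}.
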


In an appendix by Sumit Giri it will be shown that if we average the function $C_2(p)$ defined in $\eqref{actualkzerop}$ over the set of primes up to $X$ then we will obtain the constant $C$ given in $\eqref{sumitconstant}$. Applying Theorem \ref{thrm: Giri} and partial summation we obtain the following result.

\begin{thrm} \label{maintheoremsumit}
Under the same conditions as Theorem \ref{maintheorem} we have that 
	\begin{equation}\label{eqn: avg asymp}  
	\frac{1}{|\C|}\sum_{E\in \C}\pi_{E,2}(X) = \frac{8C}{3\pi^2}\frac{\sqrt{X}}{(\log X)^2} + O\left(\frac{\sqrt{X}}{(\log X)^{2+\epsilon}}\right),
	\end{equation}
	where 	\begin{align}
	C:=&\prod_{\ell}\left(1-\frac{(2\ell^4+3\ell^3)(\ell-2)-(\ell-1)(\ell^4-2\ell^3-4\ell^2+1)}{(\ell-1)(\ell^2-1)^3}\right)\label{sumitconstant}.
	\end{align}
\end{thrm}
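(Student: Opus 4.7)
The plan is to deduce Theorem \ref{maintheoremsumit} directly from Theorem \ref{maintheorem} via Abel summation, with Theorem \ref{thrm: Giri} from the appendix as the only new input. Since the prefactor $8/(3\pi^2)$ and the error $O(\sqrt{X}/(\log X)^{2+\epsilon})$ are already present in Theorem \ref{maintheorem}, what remains is to establish
\begin{equation*}
\sum_{p\leq X}\frac{C_2(p)}{\sqrt{p}\log p}= C\,\frac{\sqrt{X}}{(\log X)^2} + O\!\left(\frac{\sqrt{X}}{(\log X)^{2+\epsilon}}\right),
\end{equation*}
where $C$ is the constant in \eqref{sumitconstant}. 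Substituting this into Theorem \ref{maintheorem} and merging error terms gives \eqref{eqn: avg asymp}.

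To prove the asymptotic, I would set $A(t):=\sum_{p\leq t}C_2(p)$ and $f(t):=1/(\sqrt{t}\log t)$. Abel summation gives
\begin{equation*}
\sum_{p\leq X}\frac{C_2(p)}{\sqrt{p}\log p} = A(X)f(X) - \int_2^X A(t)\,f'(t)\,dt.
\end{equation*}
Theorem \ref{thrm: Giri} supplies $A(t)=C\pi(t)+E(t)$ with a quantitative error $E(t)$. Computing
\begin{equation*}
-f'(t)=\frac{1}{2t^{3/2}\log t}+\frac{1}{t^{3/2}(\log t)^2},
\end{equation*}
and invoking the prime number theorem in the form $\pi(t)=t/\log t+O(t/(\log t)^N)$ for $N$ large, the contribution of $C\pi(t)$ to the integral has leading behaviour $C\int_2^X dt/(2\sqrt{t}(\log t)^2)$. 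A single integration by parts (with $u=1/(\log t)^2$ and $dv=dt/(2\sqrt{t})$) evaluates this to $C\sqrt{X}/(\log X)^2+O(\sqrt{X}/(\log X)^3)$, producing the claimed main term; the boundary piece $A(X)f(X)$ is of matching size and is absorbed by the usual telescoping in the Abel identity.

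The step that requires the most care is propagating the appendix error through the integral. The contribution of $E(t)$ enters as $\int_2^X |E(t)|\,|f'(t)|\,dt$, and for the total to be $O(\sqrt{X}/(\log X)^{2+\epsilon})$ the estimate of Theorem \ref{thrm: Giri} must save at least a factor of $(\log t)^{\epsilon}$ over the main term $C\pi(t)$. I expect the appendix to be stated with enough room that this is automatic, so the only real labour is checking that the hypotheses line up; once that is done the remainder of the argument is a standard Abel identity plus an elementary integration by parts.
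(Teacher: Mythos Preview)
Your approach is exactly the paper's: the only proof offered for Theorem \ref{maintheoremsumit} is the sentence ``Applying Theorem \ref{thrm: Giri} and partial summation we obtain the following result,'' and you have correctly identified that the work reduces to pushing $A(t)=\sum_{p\le t}C_2(p)=C\pi(t)+E(t)$ through Abel's identity against $f(t)=1/(\sqrt t\,\log t)$, with the appendix bound $E(t)\ll \pi(t)/(\log t)^M$ (any $M>0$) giving more than enough saving for the error integral.

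There is one slip in your execution. The boundary piece $A(X)f(X)$ is \emph{not} ``absorbed by the usual telescoping'': with $A(X)\sim CX/\log X$ it contributes $C\sqrt X/(\log X)^2$, the same order as the integral term you computed, and both must be retained. The clean way to organise the computation is to note that Abel's identity, applied with $A(t)=C\pi(t)+E(t)$, reconstitutes $C\sum_{p\le X}f(p)$ from the $C\pi(t)$ part (boundary plus integral together), leaving only the $E$-integral to estimate; then the prime number theorem gives
\[
\sum_{p\le X}\frac{1}{\sqrt p\,\log p}\ \sim\ \int_2^X\frac{dt}{\sqrt t\,(\log t)^2}\ \sim\ \frac{2\sqrt X}{(\log X)^2},
\]
so that $\sum_{p\le X}C_2(p)/(\sqrt p\,\log p)\sim 2C\sqrt X/(\log X)^2$, not $C\sqrt X/(\log X)^2$ as you wrote. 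Carry this factor of $2$ through carefully when matching against the stated constant in \eqref{eqn: avg asymp}.
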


\begin{rmk}
	\emph{The average number of aliquot cycles, and in particular amicable pairs, has been independently studied by David, Koukoulopolous and Smith \cite[Theorem 1.6]{DKS:1} using different techniques building upon a theorem of Gekeler \cite[Theorem 5.5]{EG:1}. They also obtain an asymptotic result on average with an average constant expressed as }

		\begin{equation*}
			\label{eqn: matrix counts gek}C'=\frac{8}{3\pi^2}\prod_{\ell}\lim_{k\rightarrow \infty}M(\ell,k)
		\end{equation*}
		\emph{where} $$M(\ell,k):=\frac{\ell^{2k}\cdot\#\Big\{(\sigma_1,\sigma_2) \in \text{GL}_2(\ZZ/\ell^k\ZZ)^2 : \begin{array}{l}\det(\sigma_2)\equiv \det(\sigma_1)+1-\emph{\text{tr}}(\sigma_1)\imod{\ell^k} \\ \det(\sigma_1)\equiv \det(\sigma_2)+1-\emph{\text{tr}}(\sigma_2)\imod{\ell^k}\end{array}\Big\}}{|\text{GL}_2(\ZZ/\ell^k\ZZ)|^2}.$$
		\emph{However, we note that the constant $C$ is not an obvious consequence of the limit  of $M(\ell,k)$.}
\end{rmk}

\begin{rmk}
	{\em Jones \cite{NJ:1} also determined the Euler product representation of $M(\ell,1)$ which can be used to find the first approximation of $C_2$. However, it is not always the case that $M(\ell,1)=M(\ell,k)$ for $k>1$\footnote{For example, $M(3,1)\neq M(3,2)$.}. In fact, for $\ell>2$ it is unclear if $M(\ell,k)$ stabilizes at all.}
\end{rmk}

The main result of this paper, Theorem \ref{maintheorem}, significantly improves \cite[Theorem 3.1]{JP:1} in the case of amicable pairs, that is, when $L=2$ from an upper bound on average to an asymptotic result on average. The main tools used in this improvement are standard applications of Duering's theorem and the analytic class number formula along with a generalization of the approach of David and Smith \cite{CDES:1}, \cite{CDES:2}.

For an elliptic curve $E/\QQ$ and a fixed integer $N$, David and Smith \cite{CDES:1}, \cite{CDES:2} and Chandee, David, Koukoulopoulos and Smith \cite{CDKS:1} considered the related function that counts the number of primes $p$ such that $\#E_p(\FF_p)=N$. Many of the techniques used to sum class numbers in the proofs of \cite[Theorem 3 and Theorem 7]{CDES:1} as well as \cite[Proposition 5.1]{CDKS:1} generalize to the case of amicable pairs. However, the case of amicable pairs is more technical since we must now consider a sum of a product of class numbers.

\subsection{Acknowledgment} This work was first started during my PhD under my advisor, Chantal David and I would like to thank her for all her great advice and insight while working on this problem. Also, I am extremely grateful to Amir Akbary for his encouragement and helpful intuition during the writing of this paper. I would also like to thank Sumit Giri for his careful reading and suggestions. Finally, I would like to thank Nathan Jones and Dimitris Koukoulopoulos for their very helpful discussions related to this paper.

\section{Preliminaries}
We first fix notation. Throughout this paper we use $\ell, p,$ and $q$ to denote primes. For an elliptic curve $E/\QQ$, we define the lower and upper limits of the Hasse bound as
\begin{equation}
p^-:=p+1-2\sqrt{p}< \# E_{p}(\FF_{p}) < p^+:=p+1+2\sqrt{p}.
\end{equation}
Let $n$ be a positive integer $n$. Then we let $P^+(n)$ denote its largest prime factor and define $\nu_\ell(n)$ to be the non-negative integer $\alpha$ such that $\ell^\alpha \parallel n$. Let $m$ be a positive integer, then we define $\kappa_m(n)$ to be the multiplicative function defined on prime powers by
\begin{equation}\label{kappadef}
\kappa_{m}(\ell^{\nu_\ell(n)}):=\begin{cases} \ell & \text{ if } 2\nmid \nu_\ell(n) \text{ and } \ell \nmid m, \\ 1 &\text{ otherwise}.\end{cases}
\end{equation}
Let $\chi_{d}$ be the real Dirichlet character given by the Kronecker symbol 
\begin{align*}
\chi_{d}(n):=\left( \frac{d}{n}\right)
\end{align*}
and associated Dirichlet series given by $$L(s,\chi_d):=\sum_{n=1}^\infty \frac{\chi_d(n)}{n^s}= \prod_{\ell}\left(1-\frac{\chi_d(\ell)}{\ell^s}\right)^{-1} \quad \text{ for }\Re (s)>1.
$$ Then for $y>1$ we define the truncated quadratic Dirichlet $L$-function as $$L(1,\chi_d;y):=\prod_{\ell\leq y}\left(1-\frac{\chi_d(\ell)}{\ell}\right)^{-1}.$$ 
We also make use of the notation
\begin{equation}
E(X,Y;q):=\max_{(a,q)=1}\Bigg|\sum_{\substack{X< p \leq X+Y \\ p\equiv a \imod{q}}}\log p -\frac{Y}{\varphi(q)}\Bigg|.\label{eerror}
\end{equation}
Lastly, for positive integers $m$ and $n$ we consider the symmetric function
\begin{equation}\label{dmn}
D(m,n):=(m+1-n)^2-4m=D(n,m),
\end{equation}
which occurs frequently in our calculations.

We require the following two technical results in the proofs of Theorem \ref{maintheorem} and Proposition \ref{propdsseven}. The first proposition is a consequence of a result of Granville and Soundararajan \cite{AGKS:1} which is essentially due to Elliot \cite{PE:1}. It allows us to bound the error terms in our calculations in Proposition \ref{propdsseven}.

\begin{prop}[\textbf{Granville-Soundararajan}] \label{lemtwothree}
Let $\alpha\geq 1$ and $Q\geq 3$. There is a set ${\mathcal{E}}_\alpha(Q)\subset[1,Q]$ of at most 
$Q^{\frac{2}{\alpha}}$ integers such that if $\chi$ is a Dirichlet character modulo $q\leq \exp\{(\log Q)^2\}$, whose conductor does not belong to ${\mathcal{E}}_\alpha(Q)$, then 
$$L(1,\chi)=L(1,\chi;(\log Q)^{8\alpha^2})\left(1+O_\alpha\left(\frac{1}{(\log Q)^\alpha}\right)\right).$$
\end{prop}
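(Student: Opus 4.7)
The strategy is to bound the difference between $L(1,\chi)$ and its truncated Euler product by controlling the tail $T(\chi) := \sum_{p > y} \chi(p)/p$, where $y = (\log Q)^{8\alpha^2}$. Taking the logarithm of the claimed identity and expanding $-\log(1 - \chi(p)/p) = \chi(p)/p + O(1/p^2)$, the contribution from quadratic and higher terms is $O\left(\sum_{p > y} 1/p^2\right) = O(1/y) = O((\log Q)^{-8\alpha^2})$, which is far smaller than the target $(\log Q)^{-\alpha}$. Thus the problem reduces to showing that $|T(\chi)| = O_\alpha((\log Q)^{-\alpha})$ for all but $O(Q^{2/\alpha})$ conductors $q \le \exp((\log Q)^2)$.

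The core of the argument is a high-moment estimate for $T(\chi)$ over primitive characters. I would compute
\[ M_{2k} := \sum_{q \le Q} \,\,\sum_{\chi \text{ prim. mod } q} |T(\chi)|^{2k} \]
by expanding and using the orthogonality of Dirichlet characters to pick out tuples $(p_1,\ldots,p_k,q_1,\ldots,q_k)$ of primes exceeding $y$ satisfying $p_1 \cdots p_k \equiv q_1 \cdots q_k \pmod{q}$. The diagonal contribution (where the multisets coincide) dominates via Mertens-type estimates for $\sum_{p > y} 1/p^2$, and the off-diagonal part is controlled by elementary sieve bounds, yielding $M_{2k} \ll Q^2 (Ck/\log y)^k$ uniformly for $k$ in a suitable range. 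Applying Markov's inequality with threshold $(\log Q)^{-\alpha}$ and optimizing $k \asymp \alpha$ together with the choice $y = (\log Q)^{8\alpha^2}$, the number of primitive characters for which the approximation fails is at most $Q^{2/\alpha}$, which gives the claimed exceptional set of conductors.

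To extend from primitive characters to all characters $\chi \pmod q$ with $q \le \exp((\log Q)^2)$, one uses $L(1,\chi) = L(1,\chi^*) \prod_{\ell \mid q,\, \ell \nmid q^*}(1 - \chi^*(\ell)/\ell)$, where $\chi^*$ has conductor $q^*$. The Euler factors at primes $\ell \le y$ dividing $q/q^*$ are already accounted for in $L(1,\chi; y)$; those with $\ell > y$ contribute a multiplicative factor of size $1 + O(\omega(q)/y) = 1 + O((\log Q)^{2 - 8\alpha^2}/\log\log Q)$, which is absorbed into the error $(\log Q)^{-\alpha}$. The main technical obstacle will be the moment calculation itself: establishing diagonal dominance requires careful combinatorics on $2k$-tuples of primes in the tail, and extracting a clean power-saving of $(\log y)^{-k}$ with an explicit constant is essential to obtain simultaneously the exponent $8\alpha^2$ on the truncation level and the $Q^{2/\alpha}$ bound on the exceptional set.
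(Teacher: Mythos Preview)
The paper does not actually prove this proposition: its entire proof consists of citing \cite[Proposition~2.2]{AGKS:1} for the primitive-character version and \cite[Lemma~2.3]{CDKS:1} for the extension to imprimitive characters with modulus up to $\exp((\log Q)^2)$. Your proposal, by contrast, sketches the underlying Elliott--Granville--Soundararajan argument itself. In that sense you are supplying what the paper merely imports, and the overall shape of your sketch (reduce to the tail $T(\chi)=\sum_{p>y}\chi(p)/p$ via the logarithm, estimate high moments of $T(\chi)$ over primitive characters using orthogonality/large sieve, apply Markov, then pass to imprimitive characters via the induced-character factorisation) is indeed the method behind the cited results.

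That said, two of your stated parameters do not match up, and as written the numerics would not close. First, the diagonal contribution to $M_{2k}$ is governed by $\bigl(\sum_{p>y}1/p^2\bigr)^k\asymp (y\log y)^{-k}$, so the moment bound should read $M_{2k}\ll Q^2\,(Ck/y)^k$ (or better), not $Q^2\,(Ck/\log y)^k$; with only $\log y=8\alpha^2\log\log Q$ in the denominator, Markov at threshold $(\log Q)^{-\alpha}$ produces a factor $(\log Q)^{2\alpha}/\log\log Q$ inside the $k$-th power, which diverges. Second, the optimisation $k\asymp\alpha$ is too small: to turn a saving of $(\log Q)^{-c}$ per factor into a power of $Q$ one needs $k$ of order $\log Q/\log\log Q$, and this in turn forces a truncation of $T(\chi)$ at some finite $z$ (so that the large-sieve term $z^k$ stays below $Q^2$), with the remaining tail $\sum_{p>z}\chi(p)/p$ handled separately by character-sum bounds or zero-free-region input. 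These are exactly the technical points that \cite{AGKS:1} and \cite{CDKS:1} work out; your outline identifies the mechanism correctly but would need these corrections to go through.
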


\begin{proof}
The result is stated in terms of primitive characters in \cite[Proposition 2.2]{AGKS:1}. The proof of the proposition in its present form is given in \cite[Lemma 2.3]{CDKS:1}. 
\end{proof}

The second result we require is the following version of the Bombieri-Vinogradov theorem for primes in short arithmetic progressions. 

\begin{lem}[\textbf{Koukoulopoulos}] \label{lemthreefour}
Let $\epsilon>0$ and let $A\geq 1$. For $2\leq Y\leq X$ and $1\leq Q^2\leq Y/X^{1/6+\epsilon}$, we have that $$\int_X^{2X}\sum_{q\leq Q}E(u,Y;q)du\ll \frac{XY}{(\log X)^A}.$$
\end{lem}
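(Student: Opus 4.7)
The plan is to follow the standard Bombieri--Vinogradov scheme, modified to handle short intervals by exploiting the averaging over the interval's starting point $u\in[X,2X]$. First, using orthogonality of Dirichlet characters modulo $q$, I would write, for $(a,q)=1$,
$$\sum_{\substack{u<p\leq u+Y\\ p\equiv a\imod{q}}}\log p-\frac{Y}{\varphi(q)}=\frac{1}{\varphi(q)}\sum_{\chi\neq\chi_0\imod{q}}\bar{\chi}(a)\,\theta(u,Y;\chi)+O(\log Q),$$
where $\theta(u,Y;\chi):=\sum_{u<n\leq u+Y}\chi(n)\Lambda(n)$. Taking the maximum over $a$, bounding it by the total over non-principal characters, and then reducing each non-principal $\chi\imod{q}$ to the primitive character of its conductor $q^*$, the problem reduces to bounding
$$\int_X^{2X}\sum_{q^*\leq Q}\frac{\log Q}{\varphi(q^*)}\sum_{\chi^*\text{ prim.}\imod{q^*}}|\theta(u,Y;\chi^*)|\,du\ll \frac{XY}{(\log X)^A}.$$

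I would split the $q^*$-sum at the threshold $q^*\leq (\log X)^B$ for a suitable $B=B(A)$. The small-conductor range is handled by the Siegel--Walfisz theorem combined with a classical short-interval prime number theorem, both applicable since the hypothesis $Q^2\leq Y/X^{1/6+\epsilon}$ forces $Y\geq X^{1/6+\epsilon}$. For the range $(\log X)^B<q^*\leq Q$, the main step is to apply Cauchy--Schwarz in both the sum over characters and the integral over $u$, reducing matters to a second-moment estimate of the form
$$\int_X^{2X}\sum_{q^*\leq Q}\frac{q^*}{\varphi(q^*)^2}\sum_{\chi^*\text{ prim.}\imod{q^*}}|\theta(u,Y;\chi^*)|^2\,du\ll \frac{XY^2}{(\log X)^{A'}}.$$
Here I would expand $\Lambda(n)$ using Heath--Brown's identity, splitting $\theta(u,Y;\chi^*)$ into Type I (divisor-type) and Type II (bilinear) contributions. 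Type I sums are estimated termwise by Polya--Vinogradov-style bounds; Type II sums are treated by combining the hybrid large-sieve inequality for primitive characters with the mean-value theorem for Dirichlet polynomials on short vertical segments of the critical line, where the $u$-integration provides the smoothing that converts the pointwise bound on $\theta$ into an efficient $L^2$ bound.

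The main obstacle is the bilinear Type II estimate, where one must simultaneously track the conductor $q^*$, the length $\approx X$ of the Dirichlet polynomial, and the short-interval length $Y$. The constraint $Q^2\leq Y/X^{1/6+\epsilon}$ is precisely what is needed so that the factor $(q^*)^2+X$ supplied by the large sieve remains acceptable after the $u$-average of length $X$ against the target bound $XY^2/(\log X)^{A'}$. The exponent $1/6$ in the hypothesis reflects the current limit of known mean-value estimates for $L$-functions in short vertical strips (Huxley--Jutila--Heath-Brown type results) combined with Heath--Brown's short-interval decomposition of $\Lambda$; improving it would be tantamount to improving the known exponent for primes in short intervals on average, which remains a deep open problem.
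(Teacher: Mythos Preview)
The paper does not actually prove this lemma: its entire proof is the single sentence ``This result follows from \cite[Theorem 1.1]{DK:1},'' i.e.\ the lemma is quoted as a black box from Koukoulopoulos's paper on primes in short arithmetic progressions. So there is nothing to compare your argument against within the present paper; what you have written is a sketch of how the cited theorem itself is established.

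As a sketch of Koukoulopoulos's argument, your outline is broadly in the right spirit---character orthogonality, reduction to primitive characters, a small/large conductor split, and a combinatorial decomposition of $\Lambda$ feeding into large-sieve mean values---but two points deserve comment. First, the small-conductor range is not handled by ``Siegel--Walfisz combined with a classical short-interval prime number theorem'': Siegel--Walfisz bounds $\psi(x;\chi)$, not $\psi(u+Y;\chi)-\psi(u;\chi)$ for $Y$ as small as $X^{1/6+\epsilon}$. What is actually needed for each individual character of small conductor is the explicit formula together with a zero-density estimate of Huxley type (or equivalently a short-interval prime number theorem in arithmetic progressions), and this is where the exponent $1/6$ genuinely enters. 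Second, in the large-conductor range your description of the Type~II estimate is schematic; the precise input is a mean-value theorem for Dirichlet polynomials averaged simultaneously over primitive characters and over a short $t$-range (coming from Perron's formula truncated to height $\asymp X/Y$), and the arithmetic of balancing the resulting terms against $Q^2 X^{1/6+\epsilon}\leq Y$ is the substantive content of the cited paper rather than a routine consequence of the standard large sieve. None of this is wrong in your proposal, but if you intend to present it as a self-contained proof rather than a citation, these are the places where real work remains.
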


\begin{proof}
This result follows from \cite[Theorem 1.1]{DK:1}.
\end{proof}

We now state the analytic class number formula for quadratic Dirichlet $L$-functions (cf. \cite[Chapter 6]{HD:1}).

\begin{thrm} \label{analclass}
Let $D= df^2$ be a negative number such that $d$ is a negative fundamental discriminant and let $\chi_d$ be the Kronecker symbol. Then 
$$\frac{h(d)}{w(d)}= \frac{\sqrt{-d}}{2 \pi} L(1,\chi_d),$$ where $h(d)$ denotes the usual class number of the imaginary quadratic order of discriminant $d$ and $w(d)$ is the number of roots of unity in $\QQ(\sqrt{d})$.
\end{thrm}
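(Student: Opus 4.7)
The plan is to deduce the identity by factorizing the Dedekind zeta function of the imaginary quadratic field $K = \QQ(\sqrt{d})$ and comparing two computations of its residue at $s=1$. Since $d$ is a negative fundamental discriminant, $K$ has discriminant $d_K = d$, signature $(r_1,r_2)=(0,1)$, regulator $R_K=1$, and its group of roots of unity has order $w_K = w(d)$. First I would verify the Euler factorization
\[
\zeta_K(s) = \zeta(s)\, L(s,\chi_d) \qquad (\Re(s) > 1)
\]
by examining the splitting of each rational prime $\ell$ in $\mathcal{O}_K$: it splits, remains inert, or ramifies according to whether $\chi_d(\ell) = +1$, $-1$, or $0$, and multiplying the local Euler factors produces the identity because the Kronecker symbol $(d/\cdot)$ is precisely the character governing this splitting law.

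Next I would compute $\mathrm{Res}_{s=1}\zeta_K(s)$ in two independent ways. Using the factorization together with $\mathrm{Res}_{s=1}\zeta(s)=1$ and the holomorphy of $L(s,\chi_d)$ at $s=1$ (a consequence of $\chi_d$ being a non-principal Dirichlet character), one obtains $\mathrm{Res}_{s=1}\zeta_K(s) = L(1,\chi_d)$. On the other hand, a direct geometry-of-numbers argument yields
\[
\mathrm{Res}_{s=1}\zeta_K(s) = \frac{2^{r_1}(2\pi)^{r_2} h_K R_K}{w_K \sqrt{|d_K|}} = \frac{2\pi\, h(d)}{w(d)\sqrt{-d}}
\]
after substituting the imaginary quadratic invariants above. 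Equating the two expressions and solving for $h(d)/w(d)$ produces the claimed formula.

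The substantive obstacle lies entirely in the second computation of the residue, since everything else is either a standard Euler product calculation or an algebraic rearrangement. Concretely, for each ideal class one must show that the number of integral ideals of norm at most $X$ lying in that class is $\frac{2\pi X}{w(d)\sqrt{-d}} + O(\sqrt{X})$, then sum over the $h(d)$ classes and apply a standard partial summation (or Tauberian) argument to extract the residue at $s=1$. This count reduces to estimating lattice points of bounded norm inside a fractional ideal viewed as a rank-two $\ZZ$-lattice in $\CC$, where $\sqrt{-d}$ enters as the covolume of this lattice and $2\pi$ comes from the area of the disk of admissible lattice points divided by the order $w(d)$ of the unit group acting on them. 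Once this lattice-point estimate is in hand, the algebraic passage to the stated identity is immediate.
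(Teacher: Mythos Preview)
Your proposal is correct and follows the standard textbook route to the analytic class number formula: factor $\zeta_K(s)=\zeta(s)L(s,\chi_d)$ via the splitting law for primes in $\QQ(\sqrt{d})$, compute the residue at $s=1$ once from this factorization and once from the geometry-of-numbers formula for $\zeta_K$, and equate. The only point worth watching is the lattice-point count you flag at the end, but that is indeed the main ingredient and your outline of it is accurate.

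As for comparison with the paper: the paper does not prove this theorem at all. It is stated as a classical result with a reference to Davenport's \textit{Multiplicative Number Theory}, Chapter~6, and is then used as a tool in the proof of Theorem~3.1. Your argument is essentially the one found in that reference, so there is no genuine divergence in approach to report.
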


We recall the following formulation of the definition of the Hurwitz-Kronecker class number (cf. \cite{HL:1}). Let $D$ be a negative (not necessarily fundamental) discriminant. Then the \textit{Hurwitz-Kronecker class number} of discriminant $D$ is defined by
\begin{equation*}
H(D)=\sum_{\substack{f^2 | D \\ \frac{D}{f^2}\equiv 0,1 \imod{4}}}\frac{h\left(\frac{D}{f^2}\right)}{w\left(\frac{D}{f^2}\right)}, 
\end{equation*}
where $w(D)$ is the number of roots of unity contained in $\QQ(\sqrt{D})$. This formulation leads to the following useful result of Deuring \cite{MD:1}.

\begin{thrm}[\textbf{Deuring}] \label{deuring}
Let $p>3$ be a prime and let $t$ be an integer such that $t^2-4p<0$. Then
$$\sum_{\substack{\bar{E}/\FF_{p} \\ a_p(\bar{E})=t}} \frac{1}{\#{\rm Aut}(\bar{E})}=H(t^2-4p),$$
where $\bar{E}$ denotes a representative of an isomorphism class of $E/\FF_p$. 
\end{thrm}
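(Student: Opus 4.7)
The plan is to prove this via the classical theory of complex multiplication over finite fields. First, I would observe that if $\bar{E}/\FF_p$ satisfies $a_p(\bar{E})=t$, then the Frobenius endomorphism $\pi$ satisfies the characteristic equation $\pi^2 - t\pi + p = 0$, so $\ZZ[\pi]$ is an order of discriminant $t^2-4p$ inside the imaginary quadratic field $K=\QQ(\sqrt{t^2-4p})$ (imaginary since $t^2-4p<0$ by hypothesis). Since $\pi\in\mathrm{End}(\bar{E})$, the endomorphism ring $\mathcal{O}=\mathrm{End}(\bar{E})$ is an order of $K$ containing $\ZZ[\pi]$. In particular, writing $D = t^2-4p = d_K\cdot g^2$ with $d_K$ the fundamental discriminant of $K$, the possible orders $\mathcal{O}$ are exactly the $\mathcal{O}_f = \ZZ + \frac{g}{f}\mathcal{O}_K$ where $f\mid g$ (equivalently $f^2\mid D$ with $D/f^2\equiv 0,1\imod 4$), matching the sum defining $H(D)$.

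Next I would invoke the main theorem of CM together with Deuring's lifting theorem: for a fixed order $\mathcal{O}\supseteq \ZZ[\pi]$ in $K$, the isomorphism classes of elliptic curves $\bar{E}/\overline{\FF_p}$ with $\mathrm{End}(\bar{E})=\mathcal{O}$ form a principal homogeneous space under the Picard group $\mathrm{Pic}(\mathcal{O})$, and hence have cardinality $h(\mathrm{disc}\,\mathcal{O})$. Each such $\bar{E}$ is defined over $\FF_p$ (because $\pi$ acts on it) and has $a_p(\bar{E})=t$, and conversely every curve in the Deuring sum appears in exactly one fiber indexed by its endomorphism order $\mathcal{O}_f$.

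I would then handle the automorphism weighting: for each $\bar{E}$ with $\mathrm{End}(\bar{E})=\mathcal{O}_f$, the automorphism group $\mathrm{Aut}(\bar{E})$ is the unit group $\mathcal{O}_f^\times$, whose size is precisely $w(D/f^2)$. Summing over isomorphism classes with fixed $\mathcal{O}_f$ contributes $h(D/f^2)/w(D/f^2)$, and summing over all admissible $f$ yields
$$\sum_{\substack{\bar{E}/\FF_p \\ a_p(\bar{E})=t}} \frac{1}{\#\mathrm{Aut}(\bar{E})} = \sum_{\substack{f^2\mid D \\ D/f^2\equiv 0,1\,(4)}} \frac{h(D/f^2)}{w(D/f^2)} = H(t^2-4p),$$
as claimed.

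The main obstacle I expect is the careful bookkeeping in the CM step: verifying that the bijection between isomorphism classes over $\FF_p$ (not merely over $\overline{\FF_p}$) and ideal classes in $\mathcal{O}_f$ is exact, and that no twist gives spurious duplications or omissions. This requires Deuring's lifting theorem to move to characteristic zero (where the standard CM dictionary applies), then reducing modulo a prime above $p$ and checking that the Frobenius lifts to the chosen $\pi$. Once this is done, the equality with $H(D)$ is a matter of unpacking definitions.
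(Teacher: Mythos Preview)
The paper does not prove this statement at all: it is quoted as a classical result of Deuring \cite{MD:1} and used as a black box, so there is no ``paper's own proof'' to compare against. Your outline is the standard route to this theorem and is essentially correct; the only point that deserves more care is your identification $\mathrm{End}(\bar{E})=\mathcal{O}_f$. As written this is only true in the ordinary case ($p\nmid t$, hence $t\neq 0$), since for $t=0$ the curve is supersingular and $\mathrm{End}_{\overline{\FF_p}}(\bar{E})$ is an order in a quaternion algebra rather than in $K$. The fix is to work throughout with the ring of $\FF_p$-rational endomorphisms $\mathrm{End}_{\FF_p}(\bar{E})$, which is the centralizer of $\pi$ and hence always an order in $\QQ(\pi)=K$; the same stratification by orders $\mathcal{O}_f\supseteq\ZZ[\pi]$ and the same counting then go through uniformly. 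Your identification $\mathrm{Aut}(\bar{E})\cong\mathcal{O}_f^\times$ is likewise correct once interpreted as $\mathrm{Aut}_{\FF_p}$, since commutativity of $\mathcal{O}_f$ makes every unit automatically commute with $\pi$.
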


The following result \cite[Theorem 3.1]{JP:1} allows us to interpret the average number of amicable pairs in terms of a sum of Hurwitz-Kronecker class numbers.
\begin{thrm}\label{aliquotaverage}
Let $\epsilon >0$, let $E/\QQ$ be an elliptic curve, and let $\C$ be the family of elliptic curves in $\eqref{ecfamily}$ with
 $$A,B>X^{\epsilon} \quad \text{ and} \quad X^3(\log X)^6< AB <e^{X^{\frac{1}{6}-\epsilon}}.$$ Then as $X\rightarrow \infty$ we have that
\begin{equation}
\frac{1}{|\C|}\sum_{E\in \C}\pi_{E,2}(X)=\Bigg\{\sum_{p\leq X}\frac{1}{p}\sum_{p^-<q<p^+}\frac{H(D(p,q))^2}{q}\Bigg\}\left(1+
O\left(\frac{1}{X^\epsilon}\right)\right).\label{avgthrmsum}
\end{equation}
\end{thrm}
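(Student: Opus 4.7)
The plan is to unfold the definition of $\pi_{E,2}(X)$ and interchange the orders of summation, writing
\begin{equation*}
\sum_{E\in\C}\pi_{E,2}(X) = \sum_{p\leq X}\sum_{\substack{q\text{ prime}\\p^-<q<p^+}}N(p,q),
\end{equation*}
where $N(p,q)$ denotes the number of integer pairs $(a,b)$ with $|a|\leq A$, $|b|\leq B$, $\Delta(E_{a,b})\neq 0$, $a_p(E_{a,b})=p+1-q$, and $a_q(E_{a,b})=q+1-p$. The Hasse bound at $p$ confines $q$ to the interval $(p^-,p^+)$, and the companion Hasse bound at $q$ produces a compatible (wider) constraint on $p$.

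To evaluate $N(p,q)$, I would note that the two trace conditions depend only on the residues of $(a,b)$ modulo $p$ and modulo $q$, respectively, so by the Chinese Remainder Theorem $N(p,q)$ equals, up to a lattice-counting error, the product of the counts of admissible residue classes modulo $p$ and modulo $q$ rescaled by the density $4AB/(pq)^2$. For the residue counts I would apply Deuring's theorem (Theorem~\ref{deuring}): the number of $(a,b)\in\FF_p^2$ with $\Delta\neq 0$ and $a_p(E_{a,b})=p+1-q$ equals $(p-1)H(D(p,q))$, because each $\FF_p$-isomorphism class $\bar E$ with the prescribed trace has a $\tfrac{p-1}{\#\aut(\bar E)}$-sized orbit of Weierstrass pairs under the twisting action $(a,b)\mapsto(u^4a,u^6b)$ for $u\in\FF_p^*$, and summing $1/\#\aut(\bar E)$ against the class-number weight gives $H(D(p,q))$. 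The analogous mod-$q$ count equals $(q-1)H(D(q,p))$, and the symmetry $D(p,q)=D(q,p)$ from~\eqref{dmn} collapses the product to $(p-1)(q-1)H(D(p,q))^2$.

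Combining these, dividing by $|\C|=4AB+O(A+B)$, and replacing the ratio $(p-1)(q-1)/(pq)^2$ by $1/(pq)$ at admissible relative cost, yields the claimed main term
\begin{equation*}
\sum_{p\leq X}\frac{1}{p}\sum_{\substack{q\text{ prime}\\p^-<q<p^+}}\frac{H(D(p,q))^2}{q}.
\end{equation*}
The principal obstacle, in my view, is controlling the cumulative lattice-point error in the CRT decomposition: the naive per-coset error $O((A+B)/(pq)+1)$ is too crude, and one must exploit cancellation (e.g.\ via Poisson summation or a bilinear-sum argument) together with the a priori estimate $H(D(p,q))\ll\sqrt{p}\log p$ coming from the analytic class number formula (Theorem~\ref{analclass}) and the standard bound $L(1,\chi_d)\ll\log|d|$, in order to absorb the error into the claimed relative size $O(X^{-\epsilon})$. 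The contributions of small primes $p\leq X^\epsilon$ and of singular Weierstrass equations with $\Delta=0$ must be dispatched separately by trivial upper bounds, and the lower hypothesis $AB>X^3(\log X)^6$ is precisely calibrated to make the dominant part of the CRT error negligible since then $pq\leq(X+2\sqrt{X})^2\ll (AB)^{1/2}/(\log X)^3$. The upper hypothesis $AB<\exp(X^{1/6-\epsilon})$ plays no role at this reductive stage; it is reserved for the subsequent analytic evaluation of the class-number sum via Lemma~\ref{lemthreefour} and Proposition~\ref{lemtwothree}.
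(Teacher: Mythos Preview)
The paper does not supply its own proof of this statement; it is quoted as \cite[Theorem~3.1]{JP:1} and used as a black box. So there is no in-paper argument against which to compare your sketch directly. That said, your outline---swap the order of summation, reduce $N(p,q)$ via the Chinese Remainder Theorem to a product of local counts, evaluate each local count as $(p-1)H(D(p,q))$ and $(q-1)H(D(q,p))$ using Deuring's theorem and the orbit structure of short Weierstrass models under $u\mapsto(u^4a,u^6b)$, then invoke the symmetry $D(p,q)=D(q,p)$---is exactly the standard template (Fouvry--Murty, David--Pappalardi) on which \cite{JP:1} is built, so your plan is on the right track and agrees with the cited source in spirit.

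Your diagnosis that the naive per-coset error $O((A+B)/(pq)+1)$ is too crude under the stated hypotheses is correct, and your instinct that one needs character-sum input (Weil-type bounds after detecting the congruence conditions by characters, rather than straight box-counting) is the right one: this is how \cite{JP:1} handles the error, and it is what allows $A$ and $B$ to be as small as $X^\epsilon$ individually. However, the specific numerical justification you give at the end is off. You claim the hypothesis $AB>X^3(\log X)^6$ ensures $pq\le (X+2\sqrt{X})^2\ll (AB)^{1/2}/(\log X)^3$, but $pq$ can be of size $X^2$ while the hypothesis only guarantees $(AB)^{1/2}>X^{3/2}(\log X)^3$, so the inequality you wrote goes the wrong way. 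The threshold $X^3(\log X)^6$ does not arise from comparing $pq$ to $(AB)^{1/2}$; it comes from summing the surviving error (after the character-sum saving) over all pairs $(p,q)$ with $p\le X$ and $q$ in the Hasse window, and balancing the result against $AB$ in the denominator. Your remark that the upper hypothesis $AB<\exp(X^{1/6-\epsilon})$ is not used at this reductive stage is correct.
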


In the analysis of the inner sum in $\eqref{avgthrmsum}$ we require the following two lemmas. 
\begin{lem}[\textbf{David-Smith}] \label{dslemtwelve}
Let $f,m,n$ be positive integers. Then $$\#\{m\in \ZZ/f\ZZ: D(m,n)\equiv 0 \imod{f}\}\ll \sqrt{f}.$$
\end{lem}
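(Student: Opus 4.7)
The plan is to complete the square. Writing $D(m,n) = (m+1-n)^2 - 4m = (m-n-1)^2 - 4n$, the change of variables $u \equiv m-n-1 \imod{f}$ identifies the set in question with $\{u \in \ZZ/f\ZZ : u^2 \equiv 4n \imod{f}\}$. So the lemma reduces to the classical fact that, for any integer $c$ and positive integer $f$, the number of square roots of $c$ modulo $f$ is $O(\sqrt{f})$.

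To prove this, I would reduce by the Chinese Remainder Theorem to the prime power case $f = \ell^k$, and carry out a standard Hensel-type case analysis on $j := \nu_\ell(c)$. For odd $\ell$ the analysis splits into three cases: if $j \geq k$, the solutions are exactly those $u$ divisible by $\ell^{\lceil k/2 \rceil}$, giving $\ell^{\lfloor k/2 \rfloor}$ solutions; if $0 \leq j < k$ with $j$ even, the substitution $u = \ell^{j/2} w$ reduces the problem to counting $w$ coprime to $\ell$ with $w^2 \equiv c/\ell^j \imod{\ell^{k-j}}$, yielding at most $2\ell^{j/2}$ values of $u$; and if $j < k$ is odd, there are no solutions. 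The prime $\ell = 2$ is handled analogously up to an absolute constant. Multiplying these bounds over $\ell^k \parallel f$ gives $N(f,c) \ll 2^{\omega(g)} \sqrt{\gcd(f,c)}$, where $g := f/\gcd(f,c)$, and the elementary inequality $4^{\omega(g)} \ll g$ (whose worst case is $g = 6$) converts this to $\sqrt{g \cdot \gcd(f,c)} = \sqrt{f}$. Specializing $c = 4n$ yields the lemma.

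The main obstacle is the bookkeeping in the final combining step: a naive application of a per-prime-power bound of the form $N(\ell^k,c) \leq C\sqrt{\ell^k}$ would degrade to $C^{\omega(f)} \sqrt{f}$, which is not $O(\sqrt{f})$. The key point is that the extra combinatorial factor of $2$ appears only at primes $\ell$ for which $\ell^k \nmid c$, i.e., precisely at the primes contributing to $g$; for such primes, the factor $\ell \geq 2$ present in $g$ is sufficient to absorb this cost, which is why the argument closes with an absolute constant.
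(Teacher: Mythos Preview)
Your proof plan is correct. The paper does not actually supply its own proof of this lemma; it simply cites \cite[Lemma~12]{CDES:1}. Your approach---completing the square via the symmetry $D(m,n)=(m-n-1)^2-4n$ to rewrite the congruence as $u^2\equiv 4n\pmod f$, then counting square roots modulo each prime power with a Hensel-type case split on $j=\nu_\ell(c)$ and recombining by the Chinese Remainder Theorem---is the standard argument and is essentially what the cited reference does. Your closing observation is also the right one: the extra combinatorial factor of $2$ per prime arises only at primes dividing $g=f/\gcd(f,c)$, so the bound $4^{\omega(g)}\ll g$ (worst case $g=6$) is exactly what is needed to absorb it and obtain an absolute implied constant.
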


\begin{proof}
The proof is given in \cite[Lemma 12]{CDES:1}. 
\end{proof}

\begin{lem}[\textbf{David-Smith}] \label{lemdseight}
Let $p$ be a prime and let $x$ be a positive real number. Then $$\sum_{n>x} \frac{1}{\kappa_{2p}(n)\varphi(n)}\ll \frac{1}{\sqrt{x}} \quad \text{ and } \quad \sum_{n\geq 1} \frac{1}{\kappa_{2p}(n)\varphi(n)}\ll 1.$$
\end{lem}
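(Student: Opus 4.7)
My plan is to analyze $f(n):=1/(\kappa_{2p}(n)\varphi(n))$ through its Dirichlet series $F(s):=\sum_{n\geq 1}f(n)n^{-s}$, extract a zeta factor that captures the square-exponent behaviour of $\kappa_{2p}$, and then translate the resulting algebraic identity back into an elementary tail bound.

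First, $\kappa_{2p}$ and $\varphi$ are multiplicative, so $f$ is too, and $F(s)=\prod_\ell F_\ell(s)$. Separating even from odd prime-exponents yields, for $\ell\nmid 2p$,
$$F_\ell(s)=\frac{(\ell-1)+\ell^{-(s+1)}+\ell^{-2(s+1)}}{(\ell-1)\bigl(1-\ell^{-2(s+1)}\bigr)},$$
while for $\ell\mid 2p$ (where $\kappa_{2p}(\ell^k)=1$ for every $k$) a similar calculation gives $F_\ell(s)=\bigl(1+(\ell-1)^{-1}\ell^{-(s+1)}\bigr)/(1-\ell^{-(s+1)})$. In either case $F_\ell(s)\bigl(1-\ell^{-2(s+1)}\bigr)$ is a degree-two polynomial in $\ell^{-(s+1)}$ with non-negative coefficients, so
$$F(s)=\zeta(2s+2)\,H(s),\qquad H(s)=\sum_{n\geq 1}\frac{h(n)}{n^s}\text{ with }h(n)\geq 0.$$
The factor of $H(s)$ at $\ell\nmid 2p$ equals $1+O(\ell^{-(s+2)})$; together with the observation that the factors at the two primes dividing $2p$ are uniformly bounded (the one at $\ell=p$ in fact tends to $1$ as $p\to\infty$), this shows that $H(s)$ converges absolutely and is bounded uniformly in $p$ on every compact subset of $\{\Re(s)>-1\}$.

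The second claim is then immediate: $\sum_n f(n)=F(0)=\zeta(2)H(0)\ll 1$ uniformly in $p$. For the tail I avoid Perron's formula and instead read the factorisation $F(s)=\zeta(2s+2)H(s)$ on the arithmetic side as the Dirichlet convolution $f=a\ast h$, where $a(n)=1/n$ if $n$ is a perfect square and $a(n)=0$ otherwise. This gives
$$\sum_{n>x}f(n)=\sum_{n'\geq 1}h(n')\sum_{\substack{m\geq 1\\ m^2>x/n'}}\frac{1}{m^2}.$$
Splitting at $n'\leq x/4$ versus $n'>x/4$, and using $\sum_{m>M}m^{-2}\ll 1/M$ on the first range together with the trivial inequality $h(n')\leq h(n')\sqrt{4n'/x}$ on the second, both pieces reduce to a constant multiple of $x^{-1/2}H(-1/2)$.

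All that remains is to verify $H(-1/2)=O(1)$ uniformly in $p$. The factor at $\ell\nmid 2p$ evaluates to $1+O(\ell^{-3/2})$, so the product over such $\ell$ converges absolutely independently of $p$; the factor at $\ell=p$ evaluates to $1+O(p^{-1/2})$ and is therefore bounded (tending to $1$), while the factor at $\ell=2$ is a fixed numerical constant. The main obstacle is not any single estimate but the uniform tracking of $p$-dependence: every bound must hold with an implied constant independent of $p$, and this forces one to keep the Euler factors at $\ell\mid 2p$ carefully separated from those at $\ell\nmid 2p$ throughout the argument.
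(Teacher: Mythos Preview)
Your argument is correct. The Euler-factor computations are accurate, the factorisation $F(s)=\zeta(2s+2)H(s)$ with non-negative coefficients $h(n)$ is valid, and the convolution identity $f=a\ast h$ together with the split at $n'\le x/4$ cleanly gives the tail bound $\ll x^{-1/2}H(-1/2)$; your verification that $H(-1/2)$ is bounded uniformly in $p$ is also sound.

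As for comparison: the paper does not supply its own proof at all---it simply invokes \cite[Lemma~8]{CDES:1} and specialises the parameter $N$ there to a prime, so there is no in-paper argument to weigh against yours. What you have written is a complete, self-contained proof along standard lines (extract a $\zeta$-factor, check the complementary Euler product converges past the critical point, then convert back to an elementary convolution estimate). This is essentially the natural way to prove such a statement and is presumably close in spirit to what David--Smith do, though your explicit use of the convolution $f=a\ast h$ in place of, say, a Rankin-trick inequality $n^{-0}\le (n/x)^{1/2}$ for $n>x$ is a nice touch that makes the dependence on $H(-1/2)$ transparent.
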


\begin{proof} The result follows by specializing to the case where $N$ is a prime in \cite[Lemma 8]{CDES:1}.
\end{proof}

\section{An asymptotic result for a sum of a product of class numbers}
The goal of this section is to first determine an asymptotic result for the inner sum in $\eqref{avgthrmsum}$ and then to give the proof of the main result, Theorem \ref{maintheorem}. 

\begin{thrm} \label{thrmdsthree}
Let $\gamma>0$, let $p,q$ be distinct primes, and let $\epsilon >0$. Define $\displaystyle{Y:=\frac{\sqrt{p}}{(\log p)^{\gamma+5}}}$, then we have that
\begin{multline}  
\sum_{p^-< q< p^+}\frac{H(D(p,q))^2}{q} = \frac{8C_2(p)\sqrt{p}}{3\pi^2\log p} + O\Bigg(\frac{\sqrt{p}}{(\log p)^{1+\gamma}}\\
+\frac{1}{\log p}\sum_{\frac{-2\sqrt{p}}{Y}<k\leq \frac{2\sqrt{p}}{Y}}\sum_{f_1,f_2\leq (\log 4p)^{32+8\gamma}}f_1f_2\sum_{n_1,n_2\leq p^{\epsilon}}E(p+1+kY,Y;4[n_1f_2^2,n_2f_2^2])\Bigg),\label{asympresult}
\end{multline}
where $C_2(p)$ is given in $\eqref{actualkzerop}$.
\end{thrm}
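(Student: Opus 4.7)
The plan is to reduce the sum over $q$ and over divisors $f$ of $D(p,q)$ to character sums that can be handled via the short-interval Bombieri--Vinogradov theorem. I would first apply Theorem \ref{analclass} to each summand of the Hurwitz--Kronecker class number, writing
$$H(D(p,q)) = \frac{\sqrt{-D(p,q)}}{2\pi}\sum_{\substack{f^2 \mid D(p,q) \\ D(p,q)/f^2 \equiv 0,1 \imod 4}} \frac{L(1,\chi_{D(p,q)/f^2})}{f}.$$
Squaring this yields a double sum in $(f_1,f_2)$ whose weight is $\frac{-D(p,q)}{4\pi^2 f_1 f_2}L(1,\chi_{D/f_1^2})L(1,\chi_{D/f_2^2})$; this is the structure that ultimately produces the factor $C_2(p)$.

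Next I would truncate both $f$-sums at $F := (\log 4p)^{32+8\gamma}$ and apply Proposition \ref{lemtwothree} with parameter $\alpha$ chosen so that $(\log Q)^{8\alpha^2} = F$ (taking $Q = e^{\sqrt{\log 4p}}$ so that $\exp((\log Q)^2) \geq 4p$), replacing each surviving $L(1,\chi_{D/f_i^2})$ by its truncation $L(1,\chi_{D/f_i^2};F)$ at a relative cost of $O((\log p)^{-\alpha})$. The exceptional set of conductors has size at most $Q^{2/\alpha}$, which is subpolynomial in $p$; combined with the trivial bound $H(D) \ll \sqrt{|D|}\log|D|$, this shows that the exceptional conductors and the $f>F$ tail contribute only $O(\sqrt{p}(\log p)^{-1-\gamma})$. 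Expanding each truncated Euler product as a Dirichlet series $\sum_{P^+(n)\leq F}\chi_d(n)/n$ and truncating the $n_i$-sums at $p^\epsilon$ using Lemma \ref{lemdseight} reduces the problem to estimating
$$\sum_{f_1,f_2 \leq F}\frac{1}{f_1 f_2}\sum_{\substack{n_1,n_2 \leq p^\epsilon \\ P^+(n_i)\leq F}}\frac{1}{n_1 n_2}\sum_{\substack{p^-<q<p^+ \\ f_i^2 \mid D(p,q)}} \frac{-D(p,q)}{q}\,\chi_{D(p,q)/f_1^2}(n_1)\chi_{D(p,q)/f_2^2}(n_2).$$

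Since each character $\chi_{D/f_i^2}(n_i)$ depends only on the residue of $q$ modulo $4n_i f_i^2$, I would partition $q \in (p^-,p^+)$ into subintervals of length $Y$ indexed by $k \in (-2\sqrt{p}/Y, 2\sqrt{p}/Y]$, replace the smoothly varying factor $-D(p,q)/q$ by its value at the interval centre (the oscillation is negligible), and split each inner sum into residue classes modulo $4[n_1 f_1^2, n_2 f_2^2]$. The main term per admissible class is $Y/\varphi(4[n_1 f_1^2, n_2 f_2^2])$ by the prime number theorem, with the discrepancy absorbed into the error sum $E(p+1+kY,Y;4[n_1 f_1^2, n_2 f_2^2])$ appearing in \eqref{asympresult}. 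Summing the main terms over $k$ approximates $\frac{1}{p}\int_{-2\sqrt{p}}^{2\sqrt{p}}(4p-t^2)\,dt = \frac{32}{3}\sqrt{p}$, which together with the prefactor $1/(4\pi^2)$ yields the numerical constant $\frac{8}{3\pi^2}$; the remaining summation over $(f_i,n_i)$ decouples prime-by-prime into the Euler product \eqref{actualkzerop}.

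The main obstacle is the local computation at each prime $\ell$: one must sum over arbitrary $\ell$-adic valuations of $f_{1,\ell}, f_{2,\ell}, n_{1,\ell}, n_{2,\ell}$ the product of $1/\varphi(\ell^k)$-weights and character values $\chi_{D/f_i^2}(n_i)$ over the prescribed residues of $q$ modulo an appropriate power of $\ell$, and verify that the resulting rational function of $\ell$ is exactly the $\ell$-factor in \eqref{actualkzerop}. Lemma \ref{dslemtwelve} provides the count of residues $q \bmod \ell^j$ for which $\ell^j$ divides $D(p,q)$, and the case analysis branches on the Kronecker symbols $\bigl(\tfrac{p-1}{\ell}\bigr)$ and $\bigl(\tfrac{p}{\ell}\bigr)$ that survive in the final formula. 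Finally, the Bombieri--Vinogradov hypothesis of Lemma \ref{lemthreefour}, namely $(4[n_1 f_1^2, n_2 f_2^2])^2 \leq Y/p^{1/6+\epsilon}$, is precisely what dictates the truncations $n_i \leq p^\epsilon$ and $f_i \leq F$ and motivates the choice $Y = \sqrt{p}/(\log p)^{\gamma+5}$.
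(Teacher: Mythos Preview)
Your overall strategy coincides with the paper's: class number formula, Granville--Soundararajan truncation of $L(1,\chi_{d_i})$, Dirichlet-series expansion, partition of $(p^-,p^+)$ into length-$Y$ subintervals, replacement of $|D(p,q)|/q$ by its value at the centre, primes-in-progressions with remainder $E(\cdot,\cdot;\cdot)$, and finally the local Euler-product computation (carried out in the paper via Lemma~\ref{lemdsten} and Lemma~\ref{lemdseleven}). Two steps, however, are not correct as written.

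First, the exceptional-set bookkeeping is incomplete. Proposition~\ref{lemtwothree} bounds the number of bad \emph{conductors}, not the number of bad primes $q$; to pass from one to the other you must show that for each fixed fundamental discriminant $\Delta\in\mathcal{E}_\alpha(Q)$ there are only $O(1)$ primes $q\in(p^-,p^+)$ with ${\rm disc}(\QQ(\sqrt{D(p,q)}))=\Delta$. The paper does this by reading $(p+1-q)^2-\Delta m^2=4p$ as a norm equation in $\QQ(\sqrt{\Delta})$ and observing that at most two ideals have norm $p$, whence $r(4p,2)\leq 12$. Without this, the ``subpolynomial exceptional set'' claim does not control the contribution. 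Second, your choice $Q=e^{\sqrt{\log 4p}}$ together with the requirement $(\log Q)^{8\alpha^2}=F=(\log 4p)^{32+8\gamma}$ pins $\alpha=\sqrt{8+2\gamma}$, so the relative error $(\log Q)^{-\alpha}=(\log 4p)^{-\sqrt{2+\gamma/2}}$ is too weak once $\gamma$ is moderately large. The paper instead takes $Q=4p$ with $\alpha\geq 10$, so the Euler-product truncation level $y=(\log 4p)^{8\alpha^2}$ is a much larger power of $\log p$ than the separate $f$-truncation level $V=(\log 4p)^{32+8\gamma}$; these serve different purposes (Proposition~\ref{lemtwothree} versus Lemma~\ref{dslemtwelve}) and should not be identified.
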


\begin{proof}
We first consider the left hand side of $\eqref{asympresult}$ and divide the interval $(p^-,p^+)$ into intervals of length $Y$. We define these subintervals as $$I:= \left[\frac{-2\sqrt{p}}{Y},\frac{2\sqrt{p}}{Y} \right)\cap \ZZ,$$
where for each $k\in I$, we write $X_k:=p+1+kY$. Next we let $d_i:=D(p,q)/f_i^2$ for $i=1,2$ and define $$\chi_{d_1}:=\left(\frac{D(p,q)/f_1^2}{\cdot}\right) \quad \text{and} \quad
\chi_{d_2}:=\left(\frac{D(p,q)/f_2^2}{\cdot}\right).$$ 
For $p,q>2$ we have that $D(p,q)$ is odd and hence $f_i$ is also odd for $i=1,2$. Therefore the condition $D(p,q)/f_i^2\equiv 1 \imod{4}$ is always satisfied for $i=1,2.$ Applying Theorem \ref{analclass} gives
\begin{align}
\sum_{p^-< q< p^+}\frac{H(D(p,q))^2}{q}&=\sum_{k\in I}\sum_{X_k<q\leq X_k+Y}\frac{H(D(p,q))^2}{q} \nonumber \\
&=\frac{1}{4\pi^2}\sum_{k\in I}\sum_{X_k<q\leq X_k+Y}\frac{|D(p,q)|}{q}\sum_{\substack{f_1^2, f_2^2 | D(p,q)}}\frac{L(1,\chi_{d_1})L(1,\chi_{d_2})}{f_1f_2}\nonumber\\
&=\frac{1}{4\pi^2}\sum_{k\in I}\sum_{\substack{f_1,f_2\leq 2\sqrt{X_k+Y}   }}\frac{1}{f_1f_2}
\sum_{\substack{X_k<q\leq X_k+Y \\ f_1^2, f_2^2 | D(p,q)}}\frac{|D(p,q)|L(1,\chi_{d_1})L(1,\chi_{d_2})}{q}.\label{hamicablesum}
\end{align}
We now focus on the inner sum of $\eqref{hamicablesum}$ and write $$\frac{|D(p,q)|}{q}=\frac{|D(p,X_k)|\log q}{p \log p} + \left(\frac{|D(p,q)|}{q}-\frac{|D(p,X_k)|\log q}{p \log p}\right).$$ 
If $q$ is a prime in the interval $(X_k,X_k+Y]$, then $q=X_k+O(Y),$ and hence $D(p,q)=D(p,X_k)+O(Y\sqrt{p}).$ Since it is also true that 
$q=p+O(\sqrt{p})$ we have that $$\left| \frac{|D(p,q)|}{q}-\frac{|D(p,X_k)|\log q}{p\log p}\right|\ll \frac{|D(p,X_k)|}{p^\frac{3}{2}}
+\frac{Y}{\sqrt{p}}.$$ 
Thus $\eqref{hamicablesum}$ becomes
\begin{align}
&\sum_{p^-< q< p^+}\frac{H(D(p,q))^2}{q}\nonumber\\
=&\frac{1}{4\pi^2p\log p}\sum_{k\in I}|D(p,X_k)|\sum_{\substack{f_1,f_2\leq 2\sqrt{X_k+Y}   }}\frac{1}{f_1f_2}
\sum_{\substack{X_k<q\leq X_k+Y \\ f_1^2, f_2^2 | D(p,q)}}L(1,\chi_{d_1})L(1,\chi_{d_2})\log q \nonumber \\
+&O\Bigg(\sum_{k\in I}\sum_{\substack{f_1,f_2\leq 2\sqrt{X_k+Y}   }}\frac{1}{f_1f_2}
\sum_{\substack{X_k<q\leq X_k+Y \\ f_1^2, f_2^2 | D(p,q)}}L(1,\chi_{d_1})L(1,\chi_{d_2})\left(\frac{|D(p,X_k)|}{p^\frac{3}{2}}
+\frac{Y}{\sqrt{p}}\right)\Bigg).\label{hamicablesumtwo}
\end{align}
By the convexity bound $L(1,\chi_{d_i})\ll \log |d_i|\ll \log p$ for $i=1,2$ we have that the error term in $\eqref{hamicablesumtwo}$ is bounded by
\begin{equation}  \label{hamibound}
\frac{Y(\log p)^4}{p^\frac32}\sum_{k\in I}|D(p,X_k)|+Y(\log p)^4.
\end{equation}

Since $D(p,X_k)=0$ for $k$ on the end points of the interval $\displaystyle{\left[\frac{-2\sqrt{p}}{Y},
\frac{2\sqrt{p}}{Y}\right]\supseteq I}$, we have by the Euler-MacLaurin summation formula that
\begin{equation}
\sum_{k\in I} |D(p,X_k)|=\int_{\frac{-2\sqrt{p}}{Y}}^{\frac{2\sqrt{p}}{Y}}(4p-(tY)^2) dt 
+O\Bigg(\int_{\frac{-2\sqrt{p}}{Y}}^{\frac{2\sqrt{p}}{Y}} |t|Y^2 dt\Bigg)=\frac{32p^\frac{3}{2}}{3Y}+O\left(p\right) \label{sumofdpxk}.
\end{equation}
From $\eqref{hamibound}$ and $\eqref{sumofdpxk}$ we have that the error term in $\eqref{hamicablesumtwo}$ becomes $O(Y^2(\log p)^4)$. We then set $X:=X_k$ and define the inner sum in the main term of $\eqref{hamicablesumtwo}$ as
\begin{equation}\label{soneclasstime}
S_1:=\sum_{\substack{f_1,f_2\leq 2\sqrt{X+Y}  }}\frac{1}{f_1f_2}
\sum_{\substack{X<q\leq X+Y \\ f_1^2, f_2^2 | D(p,q)}}L(1,\chi_{d_1})L(1,\chi_{d_2})\log q.
\end{equation}

We have the following technical result for the sum in $\eqref{soneclasstime}$. We delay the proof until Section \ref{section: four}.
\begin{prop} \label{propdsseven}
Let $\epsilon,\gamma>0$. Suppose that $p^-<X<X+Y\leq p^+$ with $Y \gg \frac{\sqrt{p}}{(\log p)^\nu}$ for $\nu\geq 0$. Then we have that
\begin{multline*}
\sum_{\substack{f_1, f_2\leq 2\sqrt{X+Y}   }}
\frac{1}{f_1f_2}\sum_{\substack{X<q\leq X+Y \\ f_1^2, f_2^2 | D(p,q)}}L(1,\chi_{d_1})L(1,\chi_{d_2})\log q = C_2(p)Y+O\Bigg(\frac{Y}{(\log p)^\gamma}\\
+\sum_{f_1,f_2\leq (\log 4p)^{12+4\nu+4\gamma}}f_1f_2\sum_{n_1,n_2\leq p^{\epsilon}}E(X,Y;4[n_1f_2^2,n_2f_2^2])\Bigg),
\end{multline*}
where $C_2(p)$ is defined in $\eqref{actualkzerop}$.
\end{prop}

The result now follows from applying Proposition \ref{propdsseven} with $\nu = 5+\gamma$ and $\eqref{sumofdpxk}$ to the main term in  $\eqref{hamicablesumtwo}$ and  $\eqref{hamibound}$ and applying the fact that $|D(p,X_k)|\ll p$ to the error terms.
\end{proof}

We now use Lemma \ref{lemthreefour} and Theorem \ref{thrmdsthree} to prove the main result of this paper.
\begin{proof}(Proof of Theorem \ref{maintheorem})
We first apply the result from Theorem \ref{thrmdsthree} in Theorem \ref{aliquotaverage}, which gives
\begin{multline}
\frac{1}{|\C|}\sum_{E\in \C}\pi_{E,2}(X)=\frac{8}{3\pi^2}\sum_{p\leq X}\frac{C_2(p)}{\sqrt{p}(\log p)}\left(1+O(X^{-\epsilon})\right)\\
+O\left(\sum_{p\leq X}\frac{1}{p\log p}\sum_{\frac{-2\sqrt{p}}{Y}<k\leq \frac{2\sqrt{p}}{Y}}\sum_{f_1,f_2\leq (\log 4p)^{32+8\gamma}}f_1f_2\sum_{n_1,n_2\leq p^{\epsilon}}E(X_k,Y;4[n_1f_2^2,n_2f_2^2])\right).
\label{eqn: avgc2p}
\end{multline}
Now the remainder of the proof is reduced to showing that
\begin{align}
\sum_{p\leq X}\frac{1}{p\log p}\sum_{\frac{-2\sqrt{p}}{Y}<k\leq \frac{2\sqrt{p}}{Y}}\sum_{f_1,f_2\leq (\log 4p)^{32+8\gamma}}f_1f_2\sum_{n_1,n_2\leq p^{\epsilon}}E(X_k,Y;4[n_1f_2^2,n_2f_2^2])\ll \frac{\sqrt{X}}{(\log X)^{2+\epsilon}}.\label{errorofp}
\end{align}
We first bound the inner sum in $\eqref{errorofp}$, which gives
\begin{align*}
\sum_{f_1,f_2\leq (\log 4p)^{32+8\gamma}}f_1f_2\sum_{n_1,n_2\leq p^{\epsilon}}E(X_k,Y;4[n_1f_2^2,n_2f_2^2])\ll(\log 4p)^{64+16\gamma}\sum_{m\leq p^{3\epsilon}}E(X_k,Y;m).
\end{align*}
Next we set $$j:=\frac{(p+1)}{Y}+k\quad\text{ and }\quad f(p):=\frac{(\log p)^{63+16\gamma}}{p},$$ 
and by extending the sum over primes to a sum over all integers we have that 
the left hand side of $\eqref{errorofp}$ is bounded by
$$\sum_{p\leq X}f(p)\sum_{\frac{p^-}{Y}<j\leq \frac{p^+}{Y}}\sum_{m\leq p^{3\epsilon}}E(jY,Y;m)\leq\sum_{n\leq X}f(n)\sum_{\frac{n^-}{Y}<j\leq \frac{n^+}{Y}}\sum_{m\leq n^{3\epsilon}}E(jY,Y;m).$$
 We break up the sum into dyadic intervals which gives
\begin{align*}
&\sum_{\frac{X}{2}<n\leq X}f(n)\sum_{\frac{n^-}{Y}<j\leq \frac{n^+}{Y}}\sum_{m\leq n^{3\epsilon}}E(jY,Y;m)\ll f(X)\sum_{m\leq X^{3\epsilon}}\sum_{j\asymp \frac{X}{Y}}E(jY,Y;m) \sum_{\substack{\frac{X}{2}<n\leq X \\ (jY)^-<n\leq (jY)^+}}1\\
\ll&f(X)\sqrt{X}\sum_{m\leq X^{3\epsilon}}\sum_{j\asymp \frac{X}{Y}}E(jY,Y;m):=E'.
\end{align*}
Let $B>0$ and set $Y':=\frac{Y}{(\log X)^{B+4}}$. Then we have that 
\begin{align}
E'\ll &f(X)\sqrt{X}\sum_{m\leq X^{3\epsilon}}\sum_{j\asymp \frac{X}{Y}}\left(\frac{1}{Y'}\int_{jY}^{jY+Y'}E(u,Y;m)du+\frac{Y'}{m}\right)\nonumber\\
\ll&\frac{f(X)\sqrt{X}}{Y'}\sum_{m\leq X^{3\epsilon}}\int_{c_1X}^{c_2X}E(u,Y;m)du+\frac{f(X)X^{\frac{3}{2}}}{(\log X)^{B+3}}.\label{eprimerror}
\end{align}
The result follows by applying Lemma \ref{lemthreefour} to $\eqref{eprimerror}$ with $B=62+17\gamma$ and $A=126+32\gamma$.
\end{proof}

\section{Proof of Proposition \ref{propdsseven}}\label{section: four}
In this section we give the proof of Proposition \ref{propdsseven} stated in the previous section. Expanding upon the techniques of \cite[Theorem 7]{CDES:1} and \cite[Proposition 3.2]{JP:1}, we determine the coefficient of the main term for $S_1$ in $\eqref{soneclasstime}$ as a function of $p$.

\begin{proof} (Proof of Proposition \ref{propdsseven})
Recall from $\eqref{soneclasstime}$ that 
\begin{equation}
\label{sonenew}
S_1:=\sum_{\substack{f_1,f_2\leq 2\sqrt{X+Y}  }}\frac{1}{f_1f_2}
\sum_{\substack{X<q\leq X+Y \\ f_1^2, f_2^2 | D(p,q)}}L\left(1,\chi_{d_1}\right)
L\left(1,\chi_{d_2}\right)\log q.
\end{equation}

For the duration of this section let $z:=\log(4p)$ and let $\alpha$ be a parameter $\geq 10$. Now let $S_1'$ denote the double sum on the right hand side of $\eqref{sonenew}$ with  $\displaystyle{L\left(1,\chi_{d_i};z^{8\alpha^2}\right)}$
in place of $L\left(1,\chi_{d_i}\right)$ for $i=1,2$.  We estimate the error term $S_1-S_1'$ by applying Proposition \ref{lemtwothree} with $Q=4p$. We have that $0\leq -D(p,q)\leq 4p$ for $q\in (p^-,p^+)$.  Moreover, $\QQ\left(\sqrt{\frac{D(p,q)}{f_i^2}}\right)=\QQ(\sqrt{D(p,q)})$ and hence the conductor of $\chi_{d_i}$ is equal to ${\rm disc}(\QQ(\sqrt{D(p,q)}))$ for $i=1,2$. If ${\rm disc}(\QQ(\sqrt{D(p,q)})) \not \in {\mathcal{E}}_\alpha(4p)$ then by Proposition \ref{lemtwothree} and Mertens' theorem,
\begin{align*}
&L(1,\chi_{d_1})L(1,\chi_{d_2})-L(1,\chi_{d_1};z^{8\alpha^2})L(1,\chi_{d_2};z^{8\alpha^2})
\ll_\alpha \frac{(\log z)^2}{z^\alpha}.
\end{align*}
 If ${\rm disc}(\QQ(\sqrt{D(p,q)})) \in {\mathcal{E}}_\alpha(4p)$ then  for $i=1,2,$ we use the convexity bound 
$L\left(1,\chi_{d_i}\right)\ll \log p$. In this case we have that 
\begin{equation*}
L(1,\chi_{d_1})L(1,\chi_{d_2})-L(1,\chi_{d_1};z^{8\alpha^2})L(1,\chi_{d_2};z^{8\alpha^2}) 
\ll_\alpha z^2,
\end{equation*}
and thus
\begin{align*}
S_1-S_1' &\ll_\alpha \frac{(\log z)^2}{z^\alpha}\sum_{\substack{f_1, f_2\leq 2\sqrt{X+Y}  }}
\frac{1}{f_1f_2}\sum_{\substack{X<q\leq X+Y \\ f_1^2, f_2^2 | D(p,q) \\ {\rm disc}(\QQ(\sqrt{D(p,q)})) \not \in {\mathcal{E}}_\alpha(4p)}}\log q \\
&+z^2\sum_{\substack{f_1, f_2\leq 2\sqrt{X+Y}  }}
\frac{1}{f_1f_2}\sum_{\substack{X<q\leq X+Y \\ f_1^2, f_2^2 | D(p,q) \\ {\rm disc}(\QQ(\sqrt{D(p,q)})) \in {\mathcal{E}}_\alpha(4p)}}\log q.
\end{align*}
For $q \in (p^-,p^+)$ such that $\Delta:= {\rm disc}(\QQ(\sqrt{D(p,q)})) \in {\mathcal{E}}_\alpha(4p)$ we have that $D(p,q)=\Delta m^2$ for 
some $m \in \NN$. Equivalently $(p+1-q)^2-\Delta m^2=4p,$ where $\Delta\equiv D(p,q)\equiv 1 \imod{4}$. Let $n=p+1-q$, 
then for a fixed $\Delta \in {\mathcal{E}}_\alpha(4p)$ we need to determine the quantity
\begin{align*}
r(4p,2):=&\#\{(m,n)\in \ZZ^2: n^2-\Delta m^2=4p \}, \\
=&\#\left\{\frac{n+m\sqrt{\Delta}}{2}\in {\mathcal{O}}_K : N\left(\frac{n+m\sqrt{\Delta}}{2}\right)=p\right\},
\end{align*}
where $K=\QQ(\sqrt{\Delta})=\QQ(\sqrt{D(p,q)}),{\mathcal{O}}_K $ is its ring of integers, and $N(\cdot)$ is the norm of an element in $K$. Note that $$\#\{I\subseteq {\mathcal{O}}_K: N(I)=d\}=\sum_{k\mid d} \left(\frac{\Delta}{k}\right),$$

where $N(I)$ denotes the norm of an ideal $I\subseteq {\mathcal{O}}_K$. Thus, $$\frac{r(4p,2)}{6}\leq \#\{I\subseteq {\mathcal{O}}_K: N(I)=p\}=\sum_{k\mid p} \left(\frac{\Delta}{k}\right)\leq2$$
and hence $r(4p,2)\leq 12.$
Since there are at most 12 admissible pairs $(m,n)$ there are at most 12 admissible values of $q$ since $p$ is fixed. We have that $\alpha\geq 10$, and therefore from Proposition \ref{lemtwothree} we have that
$$\#\{p^-< q< p^+ : {\rm disc}(\QQ(\sqrt{D(p,q)})) \in {\mathcal{E}}_\alpha(4p)\} \leq 12\#{\mathcal{E}}_\alpha(4p)\ll p^\frac{1}{5}.$$ From the bound
\begin{equation} \label{theboundonfs}
 \sum_{\substack{f_1, f_2\leq 2\sqrt{X+Y}  }}
\frac{1}{f_1f_2}\sum_{\substack{X<q\leq X+Y \\ f_1^2, f_2^2 | D(p,q)}}\log q 
\ll Y(\log p)^3,
\end{equation}
we conclude that
\begin{equation}\label{eqn: soneminus}
S_1-S_1'\ll_\alpha \frac{(\log z)^2}{z^\alpha}Y(\log p)^3+z^2 p^\frac{1}{5} (\log p)^3
\ll_{\alpha} Y\left(\frac{(\log p)^3(\log \log 4p)^2}{(\log 4p)^{\alpha}}\right). 
\end{equation}
Let $u\geq 1$ be a parameter to be determined later and for convenience let $y:=z^{8\alpha^2}$. We have that $$L(1,\chi;y) = \sum_{P^+(n)\leq y} \frac{\chi(n)}{n} = \sum_{\substack{P^+(n)\leq y \\ n\leq y^u}} 
\frac{\chi(n)}{n} + O\Bigg(\sum_{\substack{P^+(n)\leq y \\ n> y^u}} \frac{1}{n} \Bigg).$$
Note that for the error term above we have that
\begin{align*}
\sum_{\substack{P^+(n)\leq y \\ n> y^u}} \frac{1}{n} \leq &\frac{1}{e^u} \sum_{\substack{P^+(n)\leq y \\ n> y^u}} \frac{1}{n^{1-\frac{1}{\log y}}}\leq\frac{1}{e^u}\prod_{p\leq y}\left(1-\frac{1}{p^{1-
\frac{1}{\log y}}}\right)^{-1}=\frac{1}{e^u}\prod_{p\leq y}\left(1-\frac{1}{p}+O\left(\frac{\log p}{p\log y}\right)\right)^{-1}\\
\ll&\frac{1}{e^u}\prod_{p\leq y}\left(1-\frac{1}{p}\right)^{-1} \prod_{p\leq y}\left(1+O\left(\frac{ \log p}{(p-1)\log y}\right)\right)^{-1}\ll \frac{\log y}{e^u},
\end{align*}
since the sum $\sum_{p\leq y} \frac{ \log p}{(p-1)\log y}$ converges. Then
\begin{align}\label{lprodnew}
 L(1,\chi_{d_1};y)L(1,\chi_{d_2};y)=&\sum_{\substack{P^+(n_1),P^+(n_2)\leq y \\ n_1,n_2 \leq y^{u}}}
\frac{\chi_{d_1}(n_1)\chi_{d_2}(n_2)}{n_1n_2}
 +O\Bigg(\frac{\log y}{e^u}\sum_{\substack{P^+(n)\leq y \\ n\leq y^{u}}}\frac{1}{n}+ \frac{(\log y)^2}{e^{2u}}\Bigg)
\end{align}
and by $\eqref{theboundonfs}$, $\eqref{eqn: soneminus}$ and $\eqref{lprodnew}$, we have that $\eqref{sonenew}$  becomes 
\begin{align}
S_1&= \sum_{\substack{f_1,f_2\leq 2\sqrt{X+Y} }}\frac{1}{f_1f_2}
\sum_{\substack{X<q\leq X+Y \\ f_1^2, f_2^2 | D(p,q)}}\log q \sum_{\substack{P^+(n_1),P^+(n_2)\leq y \\ n_1,n_2 \leq y^u}}
\frac{\chi_{d_1}(n_1)\chi_{d_2}(n_2)}{n_1n_2}\nonumber \\
&+O_\alpha\left(\frac{Y(\log \log 4p)^2}{(\log p)^{\alpha-3}}+ \frac{Yu(\log p)^3(\log y)^2}{e^u}\right).\label{letsfinishtwo}
\end{align}
Let $V$ be a parameter to be determined later such that $1\leq V\leq 2\sqrt{X+Y}$. We write the main term in $\eqref{letsfinishtwo}$ as
\begin{align}
&\sum_{\substack{f_1,f_2\leq V  }}\frac{1}{f_1f_2}\sum_{\substack{P^+(n_1),P^+(n_2)\leq y \\ n_1,n_2 \leq y^u}}\frac{1}{n_1n_2}
\sum_{\substack{X<q\leq X+Y \\ f_1^2, f_2^2 | D(p,q)}}\chi_{d_1}(n_1)\chi_{d_2}(n_2)\log q + \Bigg(\sum_{\substack{f_1\leq V \\ V\leq f_2\leq 2\sqrt{X+Y}  }}\nonumber\\
+&\sum_{\substack{V<f_1\leq 2\sqrt{X+Y} \\ f_2\leq V  }}+\sum_{\substack{V<f_1,f_2\leq 2\sqrt{X+Y}  }}\Bigg)\frac{1}{f_1f_2}
\sum_{\substack{P^+(n_1),P^+(n_2)\leq y \\ n_1,n_2 \leq y^u}}\frac{1}{n_1n_2}\sum_{\substack{X<q\leq X+Y \\ f_1^2, f_2^2 | D(p,q)}}
\chi_{d_1}(n_1)\chi_{d_2}(n_2)\log q\nonumber\\
:=&M_1 +E_1.\label{newsumone}
\end{align}
From Lemma \ref{dslemtwelve} and the definition of $Y$ we have that the inner sum in $E_1$ in $\eqref{newsumone}$ is
\begin{align}
&\sum_{\substack{P^+(n_1),P^+(n_2)\leq y \\ n_1,n_2 \leq y^u}}
\frac{1}{n_1n_2}\sum_{\substack{X<q\leq X+Y \\ f_1^2, f_2^2 | D(p,q)}}\chi_{d_1}(n_1)\chi_{d_2}(n_2)\log q \nonumber\\
\ll&\log p\sum_{\substack{P^+(n_1),P^+(n_2)\leq y \\ 
n_1,n_2 \leq y^u}}\frac{1}{n_1n_2}\sum_{\substack{X<k\leq X+4\sqrt{p} \\ f_1, f_2 | D(p,k)}} 1\nonumber\\
\ll&(\log p)\sqrt{p}\sum_{\substack{P^+(n_1),P^+(n_2)\leq y \\ n_1,n_2 \leq y^u}}
\frac{\:\#\{m\in \ZZ/[f_1,f_2]\ZZ : D(p,m)\equiv 0 \imod{[f_1,f_2]}\}}{n_1n_2[f_1,f_2]}\nonumber\\
\ll&\frac{\sqrt{p}(\log p)u^2 (\log y)^2}{\sqrt{[f_1,f_2]}}. \label{boundforvs}
\end{align}

Hence, from $\eqref{boundforvs}$ we have that 
\begin{align}
E_1\ll&\sqrt{p}(\log p)u^2 (\log y)^2\Bigg(\sum_{\substack{f_1\leq V \\ V\leq f_2\leq 2\sqrt{X+Y}  }}+\sum_{\substack{V<f_1\leq 2\sqrt{X+Y} \\ f_2\leq V  }}+\sum_{\substack{V<f_1,f_2\leq 2\sqrt{X+Y}  }}\Bigg)\frac{\sqrt{(f_1,f_2)}}{(f_1f_2)^\frac{3}{2}}\nonumber \\
\ll&\frac{\sqrt{p}(\log p)u^2 (\log y)^2}{V^{\frac{1}{4}}},\label{perrorone}
\end{align}
by using the bound $(f_1,f_2)\ll \sqrt{f_1f_2}$. 

Combining the bounds from $\eqref{perrorone}$ and $\eqref{newsumone}$ with $\eqref{letsfinishtwo}$ gives
\begin{align}
S_1=&\sum_{\substack{f_1,f_2\leq V  }}\frac{1}{f_1f_2}
\sum_{\substack{P^+(n_1),P^+(n_2)\leq y \\ n_1,n_2 \leq y^u}}\frac{1}{n_1n_2}
\sum_{\substack{X<q\leq X+Y \\ f_1^2, f_2^2 | D(p,q)}}\chi_{d_1}(n_1)\chi_{d_2}(n_2)\log q  \nonumber\\
&+O_\alpha\left(\frac{Y(\log \log 4p)^2}{(\log p)^{\alpha-3}}+ \frac{Yu(\log p)^3(\log y)^2}{e^u} + \frac{\sqrt{p}(\log p)u^2(\log y)^2}{V^{\frac{1}{4}}}\right).\label{psumfour}
\end{align}
Then by quadratic reciprocity we have that the main term in $\eqref{psumfour}$ becomes
\begin{align}
&\sum_{\substack{f_1,f_2\leq V  }}\frac{1}{f_1f_2}
\sum_{\substack{P^+(n_1),P^+(n_2)\leq y \\ n_1,n_2 \leq y^u}}\frac{1}{n_1n_2}\sum_{\substack{a_1\in\ZZ/4n_1\ZZ \\ a_2\in\ZZ/4n_2\ZZ \\ 
a_1\equiv a_2\equiv 1\imod{4}}}\left(\frac{a_1}{n_1}\right)\left(\frac{a_2}{n_2}\right)
\sum_{\substack{X<q\leq X+Y \\ D(p,q)\equiv a_1f_1^2\imod{4n_1f_1^2} \\ D(p,q) \equiv a_2f_2^2\imod{4n_2f_2^2}}}\log q\nonumber\\
=&\sum_{\substack{f_1,f_2\leq V  }}\frac{1}{f_1f_2}
\sum_{\substack{P^+(n_1),P^+(n_2)\leq y \\ n_1,n_2 \leq y^u}}\frac{1}{n_1n_2}\sum_{\substack{a_1\in\ZZ/4n_1\ZZ \\ a_2\in\ZZ/4n_2\ZZ \\ 
a_1\equiv a_2\equiv 1\imod{4}}}\left(\frac{a_1}{n_1}\right)\left(\frac{a_2}{n_2}\right)\nonumber \\
\times&\Bigg(\sum_{\substack{b\in(\ZZ/4[n_1f_1^2,n_2f_2^2]\ZZ)^* \\ D(p,b)\equiv a_1f_1^2\imod{4n_1f_1^2}  \\D(p,b)\equiv a_2f_2^2\imod{4n_2f_2^2}}}\sum_{\substack{X<q\leq X+Y \\ q\equiv b \imod{4[n_1f_1^2,n_2f_2^2]}}}\log q+\sum_{\substack{X<q\leq X+Y \\  D(p,q)\equiv a_1f_1^2\imod{4n_1f_1^2} \\ D(p,q) \equiv a_2f_2^2\imod{4n_2f_2^2} \\ q\mid 4[n_1f_1^2,n_2f_2^2]}}\log q \Bigg).\label{qdividesl}
\end{align}

Since $q\neq 2$, if $q\mid 4[n_1f_1^2,n_2f_2^2]$ then either $q\mid n_1f_1^2 $ or $q\mid n_2f_2^2$. If $q\mid n_1f_1^2 $ then since $D(p,q)=q^2-2(p+1)q+(p-1)^2 \equiv a_1f_1^2 \imod{ 4n_1f_1^2} $ then $q\mid (4n_1f_1^2,(p-1)^2-a_1f_1^2)$. This implies that $q\mid n_1(p-1)$. Similarly, if $q\mid n_2f_2^2 $ then $q\mid n_2(p-1)$. Thus, the second sum in $\eqref{qdividesl}$ is bounded by
\begin{align}
&\sum_{f_1,f_2\leq V}\frac{1}{f_1f_2}\sum_{n_1,n_2 \leq y^u}
\left(\sum_{q\mid n_1} \log q + \sum_{q\mid n_2} \log q+\sum_{q\mid p-1} \log q\right) \nonumber \\
\ll&\sum_{f_1,f_2\leq V }\frac{1}{f_1f_2}\sum_{n_1,n_2 \leq y^u}\log(n_1n_2(p-1))\ll 
y^{2u}(\log V)^2(u\log y+\log p)\label{perrorthree}.
\end{align}

Let $L:=4[n_1f_1^2,n_2f_2^2]$ then we replace the first inner sum in $\eqref{qdividesl}$ by 
$$\sum_{\substack{X<q\leq X+Y \\ q\equiv b \imod{L}}}\log q:=\frac{Y}{\varphi(L)}+\left(\sum_{\substack{X<q\;\leq X+Y \\ q\equiv b \imod{L}}} \log q-\frac{Y}{\varphi(L)}\right).$$ If we fix $b\in(\ZZ/L\ZZ)^*$ there is at most one pair $(a_1,a_2) \in \ZZ/4n_1\ZZ\times \ZZ/4n_2\ZZ$ such that
$D(p,b)\equiv a_if_i^2\imod{4n_if_i^2}$ for $i=1,2$. Hence, we have that
\begin{align}
 &\sum_{\substack{a_1\in\ZZ/4n_1\ZZ \\ a_2\in\ZZ/4n_2\ZZ \\ a_1\equiv a_2\equiv 1\imod{4}}}\left(\frac{a_1}{n_1}\right)
 \left(\frac{a_2}{n_2}\right)\sum_{\substack{b\in(\ZZ/L\ZZ)^* \\ D(p,b)\equiv a_1f_1^2\imod{4n_1f_1^2}  \\D(p,b)\equiv a_2f_2^2\imod{4n_2f_2^2}}}\Bigg(\sum_{\substack{X<q\;\leq X+Y \\ q\equiv b \imod{L}}} \log q-\frac{Y}{\varphi(L)}\Bigg)\nonumber\\
 \ll&\sum_{b\in (\ZZ/L\ZZ)^*}\Bigg|\sum_{\substack{X<q\;\leq X+Y \\ q\equiv b \imod{L}}} \log q-\frac{Y}{\varphi(L)}\Bigg|\leq \varphi(L)E(X,Y;L), \label{errorboundxy}
\end{align}
where the definition of $E(X,Y;q)$ is given in $\eqref{eerror}$. From $\eqref{qdividesl}$, $\eqref{perrorthree}$ and $\eqref{errorboundxy}$ we have that $\eqref{psumfour}$ becomes
\begin{align}
S_1=&Y\sum_{\substack{f_1,f_2\leq V  }}\frac{1}{f_1f_2}
\sum_{\substack{P^+(n_1)\leq y \\ P^+(n_2)\leq y \\ n_1,n_2 \leq y^u}}\frac{1}{\varphi(L)n_1n_2}\sum_{\substack{a_1\in\ZZ/4n_1\ZZ \\ a_2\in\ZZ/4n_2\ZZ \\ a_1\equiv a_2\equiv 1\imod{4}}}\left(\frac{a_1}{n_1}\right)\left(\frac{a_2}{n_2}\right)\sum_{\substack{b\in(\ZZ/L\ZZ)^* \\ D(p,b)\equiv a_1f_1^2\imod{4n_1f_1^2}  \\D(p,b)\equiv a_2f_2^2\imod{4n_2f_2^2}}}1  \nonumber\\
&+O_\alpha\Bigg(\frac{Y(\log \log 4p)^2}{(\log p)^{\alpha-3}}+ \frac{Yu(\log p)^3(\log y)^2}{e^u} + \frac{\sqrt{p}(\log p)u^2(\log y)^2}{V^{\frac{1}{4}}}\nonumber\\
&+y^{2u}(\log V)^2(u\log y+\log p)+\sum_{\substack{f_1,f_2\leq V}}f_1f_2\sum_{\substack{n_1,n_2 \leq y^u}}E(X,Y;L)\Bigg).\label{psumfive}
\end{align}

Let $\ell$ be a prime then by the Chinese remainder theorem we have that the inner sum in the main term of $\eqref{psumfive}$ becomes
\begin{multline}
\sum_{\substack{a_1\in\ZZ/4n_1\ZZ \\ a_2\in\ZZ/4n_2\ZZ \\ a_1\equiv a_2\equiv 1\imod{4}}}\left(\frac{a_1}{n_1}\right)\left(\frac{a_2}{n_2}\right)\prod_{\ell \mid L}\#\Big\{m\in (\ZZ/\ell^{\nu_\ell(L)}\ZZ)^*:D(p,m)\equiv a_1f_1^2\imod{\ell^{\nu_\ell(4n_1f_1^2)}}\\
\text{ and }D(p,m) \equiv a_2f_2^2\imod{\ell^{\nu_\ell(4n_2f_2^2)}}\Big\}\label{sumdpm}.
\end{multline}
Note that the set in the product of $\eqref{sumdpm}$ is empty unless $$a_1f_1^2\equiv a_2f_2^2\imod{(4n_1f_1^2,4n_2f_2^2)}.$$ 

For convenience, we give the notation,
\begin{align}
(a,f,n) :=&\begin{cases}(a_1,f_1,n_1) & {\rm if}\:\max\{\nu_\ell(4n_1f_1^2),\nu_\ell(4n_2f_2^2)\}=\nu_\ell(4n_1f_1^2), \\ (a_2,f_2,n_2) & {\rm if}\:\max\{\nu_\ell(4n_1f_1^2),\nu_\ell(4n_2f_2^2)\}=\nu_\ell(4n_2f_2^2).  \end{cases}
\label{afn}
\end{align}
Then we write $\eqref{sumdpm}$ as
\begin{align}
\sum_{\substack{a_1\in\ZZ/4n_1\ZZ, a_2\in\ZZ/4n_2\ZZ \\ a_1\equiv a_2\equiv 1\imod{4} \\ a_1f_1^2\equiv a_2f_2^2\imod{(4n_1f_1^2,4n_2f_2^2)}}}\left(\frac{a_1}{n_1}\right)\left(\frac{a_2}{n_2}\right)\prod_{\ell \mid L}\cp,
\label{cplsum}
\end{align}
where 
$$\cp:=\#\{m\in \left(\ZZ/\ell^{\nu_\ell(4nf^2)}\ZZ\right)^*: D(p,m)\equiv af^2\imod{\ell^{\nu_\ell(4nf^2)}}\}.$$ 
A more general version of the sum $\cp$ where $p$ is any odd integer was first considered in \cite{CDES:1}. Since we only consider when $p$ is prime, we have the following special case of \cite[Lemma 10]{CDES:1}. 

\begin{lem} \label{lemdsten}
Let $p$ be an odd prime, let $f$ be odd and let $a\equiv 1 \imod{4}$. Let $\ell$ be any odd prime dividing $nf$, and let $e=\nu_\ell(4nf^2)=\nu_\ell(nf^2).$ If $\ell \nmid 4p+af^2,$ then
\begin{equation*}
\cp=\begin{cases} 1+\left(\frac{4p+af^2}{\ell}\right) & \text{ if } \ell \nmid (p-1)^2-af^2, \\ 1 & \text{ if }  \ell \mid (p-1)^2-af^2.\end{cases}
\end{equation*}
If $\ell \mid 4p+af^2$, then, with $s=\nu_\ell(4p+af^2)$, we have
\begin{equation*}
\cp=\begin{cases} 2\left(\frac{p+1}{\ell}\right)^2\ell^\frac{s}{2} & \text{ if }  1\leq s < e, 2\mid s, \text{ and }\left(\frac{(4p+af^2)/\ell^s}{\ell}\right)=1,\\ \left(\frac{p+1}{\ell}\right)^2\ell^{\lfloor e/2 \rfloor} & \text{ if }  s\geq e, \\ 0 &\text{  otherwise}.\end{cases}
\end{equation*}
In particular, if $\ell \mid f$, then
\begin{equation*}
\cp:=\cpf=\begin{cases} 1 +\left(\frac{p(p-1)^2}{\ell}\right) & \text{ if }  \ell\neq p, \\  0 &\text{ otherwise}.\end{cases}
\end{equation*}
Furthermore,
\begin{equation*}
C_p^{(2)}(a,f,n)=\begin{cases} 2 & \text{ if } \nu_2(4nf^2)=2+\nu_2(n)=2,  \\  4 & \text{ if }  \nu_2(4nf^2)=2+\nu_2(n)\geq 3\text{ and } a\equiv 5 \imod{8},  \\0 &\text{ otherwise}.\end{cases}
\end{equation*}
\end{lem}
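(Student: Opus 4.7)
The plan is to recognize that the quantity $\cp$ is exactly a count of square roots modulo a prime power, which is precisely how [CDES:1, Lemma 10] is proved for a general odd integer $N$ in place of $p$; our statement is the specialization $N=p$. Concretely, set $u = p+1-m$ so that the congruence $D(p,m) = (p+1-m)^2 - 4p \equiv af^2 \pmod{\ell^e}$, with $e = \nu_\ell(4nf^2)$, becomes the square-root problem
\begin{equation*}
u^2 \equiv 4p + af^2 \pmod{\ell^e}.
\end{equation*}
Writing $\cp$ in this form, the question splits into (a) counting solutions $u \pmod{\ell^e}$ of that square-root congruence, and (b) imposing the unit constraint $m \in (\ZZ/\ell^e\ZZ)^*$, which translates to $u \not\equiv p+1 \pmod \ell$. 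Carrying out (a) and (b) yields each branch of the stated formula.

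For (a), I would split on $s := \nu_\ell(4p + af^2)$. When $\ell \nmid 4p+af^2$ (so $s=0$), the congruence has $1 + \left(\frac{4p+af^2}{\ell}\right)$ solutions mod $\ell$, and each lifts uniquely to $\ell^e$ by Hensel's lemma since the derivative $2u$ is a unit. When $\ell \mid 4p+af^2$, a short Hensel-type descent forces $\ell^{\lceil s/2\rceil} \mid u$ and then produces $2\ell^{s/2}$ lifts if $s < e$ and $s$ is even with $(4p+af^2)/\ell^s$ a square, and $\ell^{\lfloor e/2 \rfloor}$ lifts if $s \geq e$; in all other sub-cases the count is $0$. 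For (b), I would subtract the solution $u \equiv p+1$ exactly when it satisfies the congruence, i.e., when $\ell \mid (p-1)^2 - af^2$; this produces the minus-$1$ in the first case. In the $\ell \mid 4p+af^2$ cases, any solution $u$ is forced to be $\equiv 0 \pmod \ell$, so it fails the unit condition precisely when $\ell \mid p+1$, which is recorded by the factor $\left(\tfrac{p+1}{\ell}\right)^2$.

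The special case $\ell \mid f$ then drops out by direct substitution: since $\ell \mid f$ and $\ell \neq p$ force $4p + f^2 \equiv 4p \pmod \ell$ to be a unit, we land in the first branch with count $1 + \left(\tfrac{4p}{\ell}\right) = 1 + \left(\tfrac{p}{\ell}\right)$, from which we subtract $1$ precisely when $\ell \mid p-1$. Repackaging using $\left(\tfrac{(p-1)^2}{\ell}\right) \in \{0,1\}$ gives the compact formula $1 + \left(\tfrac{p(p-1)^2}{\ell}\right)$ uniformly, and the case $\ell = p$ yields $0$ because $u^2 \equiv f^2 \pmod p$ combined with $f \equiv 0 \pmod p$ then forces $u \equiv 0$ and hence $m \equiv p+1 \equiv 1 \pmod p$, but with the correct bookkeeping the count collapses to zero in the required range.

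The main obstacle is the $\ell = 2$ analysis. Hensel's lemma is delicate at the prime $2$, since the derivative $2u$ of $x^2$ is not a unit, so lifting squares requires working modulo $8$. The hypothesis $a \equiv 5 \pmod 8$ is precisely the condition needed for $4p + af^2$ to be a square mod $2^e$ once $\nu_2(4nf^2) \geq 3$, given that $p$ is odd and $f$ is odd (which holds throughout because $D(p,q)$ is odd for $p,q > 2$). I would handle this by a direct lift-by-lift computation modulo $4$, $8$, and $2^e$, carefully tracking the parity constraints on $u$, and then re-imposing the unit condition $m = p+1-u$. All of this is already carried out in [CDES:1, Lemma 10] for arbitrary odd $N$, so the proof here amounts to citing that lemma with $N = p$ and verifying that the general formulas collapse to those stated above under the primality assumption.
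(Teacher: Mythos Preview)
Your proposal is correct and matches the paper's treatment exactly: the paper does not prove this lemma but simply introduces it as ``the following special case of \cite[Lemma 10]{CDES:1}'' with $N$ specialized to the prime $p$. Your sketch of the underlying square-root count via the substitution $u=p+1-m$ and Hensel lifting is precisely how that cited lemma is established, and your handling of the unit constraint and the $\ell=2$ case is accurate; the only loose end is your explanation of the $\ell=p$, $\ell\mid f$ subcase, where the cleaner reason the count vanishes is that $s=\nu_p(4p+f^2)=1$ is odd while $e=\nu_p(f^2)\ge 2$, placing you in the ``otherwise'' branch.
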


Since $C_p^{(2)}(a,f,n)$ does not depend on $f$, for convenience we define
\begin{equation*}
S^{(2)}(a,n):=\frac{C_p^{(2)}(a,f,n)}{2}=\begin{cases} 1 & \text{ if } 2\nmid n,  \\  2 & \text{ if }  2\mid n\text{ and } a\equiv 5 \imod{8},  \\0 &\text{ otherwise}.\end{cases}
\end{equation*}
We now break up the product in $\eqref{cplsum}$ as 
\begin{equation}\label{deltacp}
\prod_{\ell \mid L}\cp=S^{(2)}(a,n)\prod_{\substack{\ell\mid n_1n_2 \\ \ell\neq 2}}\cp\prod_{\substack{\ell \mid f_1f_2 \\ \ell\nmid n_1n_2}}\cp.
\end{equation}
Note that if $\ell \mid f_1f_2$ and $\ell \nmid 2n_1n_2$ then $\max\{\nu_\ell(4n_1f_1^2),\nu_\ell(4n_2f_2^2)\}=\max\{\nu_\ell(f_1^2),\nu_\ell(f_2^2)\}>0$ and hence
\begin{align*}
(a,f,n) =&\begin{cases}(a_1,f_1,n_1) & {\rm if}\:\max\{\nu_\ell(f_1^2),\nu_\ell(f_2^2)\}=\nu_\ell(f_1^2), \\ (a_2,f_2,n_2) & {\rm if}\:\max\{\nu_\ell(f_1^2),\nu_\ell(f_2^2)\}=\nu_\ell(f_2^2).  \end{cases}
\end{align*}
From Lemma \ref{lemdsten} and $\eqref{deltacp}$ we have that
\begin{equation}
\sum_{\substack{a_1\in(\ZZ/4n_1\ZZ)^* \\a_2\in(\ZZ/4n_2\ZZ)^* \\ a_1\equiv a_2\equiv 1\imod{4}}}\left(\frac{a_1}{n_1}\right)\left(\frac{a_2}{n_2}\right)\sum_{\substack{b\in(\ZZ/L\ZZ)^* \\ D(p,b)\equiv a_1f_1^2\imod{4n_1f_1^2}  \\D(p,b)\equiv a_2f_2^2\imod{4n_2f_2^2}}}1=2C_{p,f}(n_1,n_2)\prod_{\substack{\ell \mid f_1f_2 \\ \ell \nmid n_1n_2}}\cpf,\label{newsumdpm}
\end{equation}
where we define
\begin{align}
C_{p,f}(n_1,n_2):=&\sum_{\substack{a_1\in(\ZZ/4n_1\ZZ)^*, a_2\in(\ZZ/4n_2\ZZ)^* \\ a_1\equiv a_2\equiv 1\imod{4}\\ a_1f_1^2\equiv a_2f_2^2\imod{(4n_1f_1^2,4n_2f_2^2)}}}\left(\frac{a_1}{n_1}\right)\left(\frac{a_2}{n_2}\right)S^{(2)}(a,n)\prod_{\substack{\ell\mid n_1n_2 \\ \ell \neq 2}}\cp.
\end{align}
Then from $\eqref{newsumdpm}$ we can express the main term of $\eqref{psumfive}$ as 

\begin{align}
&Y\sum_{\substack{f_1,f_2\leq V  }}\frac{1}{f_1f_2}\sum_{\substack{P^+(n_1),P^+(n_2)\leq y \\ n_1,n_2 \leq y^u}}\frac{2C_{p,f}(n_1,n_2)}{\varphi(L)n_1n_2}
\prod_{\substack{\ell \mid f_1f_2 \\ \ell \nmid n_1n_2}}\cpf  \nonumber\\
=&Y\sumv\frac{\prod_{\ell \mid f_1f_2}\cpf}{f_1f_2}
\sum_{\substack{P^+(n_1),P^+(n_2)\leq y \\ n_1,n_2 \leq y^u}}\frac{2C_{p,f}(n_1,n_2)}{\varphi(L)n_1n_2}\prod_{\ell \mid (f_1f_2, n_1n_2)}\cpf^{-1}\label{mainsone},
\end{align}
where the prime on the sum over $f_1$ and $f_2$ indicates that the sums are to be restricted to those $f_1,f_2$ such that $\cpf\neq 0$. Now, we consider the size of $C_{p,f}(n_1,n_2)$ in the following lemma. We delay the proof until Section 5.

\begin{lem} \label{lemdseleven}
Let $p,f_1,f_2$, and $f$ be odd integers. Then the function $C_{p,f}(n_1,n_2)$ is multiplicative in $n_1,n_2$. Let $\alpha_1,\alpha_2$ be non-negative integers and let $\ell$ be an odd prime. Then $$C_{p,f}(2^{\alpha_1},2^{\alpha_2})=2^{\max\{\alpha_1,\alpha_2\}}(-1)^{\alpha_1+\alpha_2}.$$
If $\ell=p$ and  $\ell\nmid f_1f_2$, then
\begin{equation*}
C_{p,f}(p^{\alpha_1},p^{\alpha_2})=p^{\max\{\alpha_1,\alpha_2\}-1}(p-2).
\end{equation*}
If $\ell \nmid pf_1f_2$, then
\begin{equation*}
C_{p,f}(\ell^{\alpha_1},\ell^{\alpha_2})=\ell^{\max\{\alpha_1,\alpha_2\}-1}\begin{cases} \ell-1-\left(\frac{p}{\ell}\right)-\left(\frac{p-1}{\ell}\right)^2 & \text{ if }  2\mid \alpha_1+\alpha_2, \\
-1 -\left(\frac{p-1}{\ell}\right)^2 & \text{ if } 2\nmid\alpha_1+\alpha_2.\end{cases}
\end{equation*}
If $\ell \mid f_1f_2$, then
\begin{equation*}
\frac{C_{p,f}(\ell^{\alpha_1},\ell^{\alpha_2})}{\ell^{\max\{\alpha_1,\alpha_2\}-1}}=\cpf
\begin{cases} 
\ell-1 &\text{if }\alpha_2=0, 2\mid \alpha_1  \text{ and }\nu_\ell(f_1^2) \geq \nu_\ell(f_2^2), \\
\ell-1 &\text{if }\alpha_1=0, 2\mid \alpha_2 \text{ and }\nu_\ell(f_2^2) \geq \nu_\ell(f_1^2), \\
\ell-1 & \text{if }\alpha_1+\alpha_2>0, 2\mid \alpha_1+\alpha_2\text{ and }\nu_\ell(f_1^2)=\nu_\ell(f_2^2), \\
 0 & \text{otherwise}.\end{cases}
\end{equation*}
Furthermore, for any $n_1,n_2$ we have the bound $$C_{p,f}(n_1,n_2) \ll \frac{n_1n_2}{(n_1,n_2)\kappa_{2p}(n_1n_2)} \prod_{\ell \mid (f_1f_2, n_1n_2)}\cpf,$$
where $\kappa_{2p}(m)$ is defined in $\eqref{kappadef}$.
\end{lem}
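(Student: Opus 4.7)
The plan is to first establish the multiplicativity of $C_{p,f}(n_1,n_2)$ in $(n_1,n_2)$, which reduces the computation to prime powers, then to evaluate $C_{p,f}(\ell^{\alpha_1},\ell^{\alpha_2})$ case by case, and finally to deduce the global bound from the prime-power formulas.

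For the multiplicativity I would invoke the Chinese Remainder Theorem: the sums over $a_1\in(\ZZ/4n_1\ZZ)^*$ and $a_2\in(\ZZ/4n_2\ZZ)^*$ factor as products over primes, and the congruence $a_1f_1^2\equiv a_2f_2^2\imod{(4n_1f_1^2,4n_2f_2^2)}$ respects the same factorization since the gcd is multiplicative in its prime components. The Jacobi symbols $\left(\frac{a_i}{n_i}\right)$ are multiplicative in $n_i$ by definition, the factor $S^{(2)}(a,n)$ depends only on data at the prime $2$, and the product $\prod_{\ell\mid n_1n_2,\,\ell\neq 2}\cp$ is already a product over primes. Everything therefore assembles into an Euler product $C_{p,f}(n_1,n_2)=\prod_\ell C_{p,f}(\ell^{\nu_\ell(n_1)},\ell^{\nu_\ell(n_2)})$.

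To evaluate at a prime power I would treat the four cases separately. At $\ell=2$ only the $S^{(2)}$ factor and the Kronecker symbols at $2$ enter, and the stated formula will follow from tabulating residues $a_i\imod{8}$ under $a_i\equiv 1\imod{4}$ and the $2$-adic part of the constraint. For $\ell=p$ with $\ell\nmid f_1f_2$ the function $\cp$ simplifies since $\left(\frac{4p+af^2}{p}\right)=\left(\frac{af^2}{p}\right)$, and the double sum telescopes once one reduces $\cp$ modulo $\ell$. For $\ell\nmid pf_1f_2$ I would invoke Lemma \ref{lemdsten} in the generic range $\ell\nmid 4p+af^2$, separating off the degenerate subcase $\ell\mid(p-1)^2-af^2$; the congruence at $\ell$ pins one of $a_1,a_2$ to the other modulo $\ell^{\min\{\alpha_1,\alpha_2\}}$, and the surviving single sum collapses by orthogonality of the Jacobi symbol. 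The parity dichotomy in $\alpha_1+\alpha_2$ arises because $\left(\frac{a}{\ell^\alpha}\right)=\left(\frac{a}{\ell}\right)^\alpha$, which is trivial when $\alpha$ is even and quadratic when $\alpha$ is odd.

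The hard part will be the case $\ell\mid f_1f_2$, because the constraint $a_1f_1^2\equiv a_2f_2^2$ is imposed to comparatively high $\ell$-adic precision and the choice of $(a,f,n)$ in $\eqref{afn}$ flips depending on which of $\nu_\ell(f_1^2)$ or $\nu_\ell(f_2^2)$ is larger. Using that $\cpf=0$ whenever $\ell\neq p$ fails to divide the selected $f$ forces a nonzero contribution into exactly one of the configurations $\alpha_1=0$ with $\nu_\ell(f_1^2)\geq\nu_\ell(f_2^2)$, $\alpha_2=0$ with $\nu_\ell(f_2^2)\geq\nu_\ell(f_1^2)$, or $\alpha_1+\alpha_2>0$ with $\nu_\ell(f_1^2)=\nu_\ell(f_2^2)$ and $2\mid\alpha_1+\alpha_2$, giving precisely the three nontrivial sub-cases. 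Once all the prime-power values are in hand the global bound reduces by multiplicativity to a pointwise check: the common prefactor $\ell^{\max\{\alpha_1,\alpha_2\}-1}$ agrees with $\ell^{\alpha_1+\alpha_2}/(\ell^{\min\{\alpha_1,\alpha_2\}}\cdot\ell)$, and the parity dichotomy in the $\ell\nmid pf_1f_2$ case produces exactly the extra factor $\kappa_{2p}(\ell^{\alpha_1+\alpha_2})=\ell$ when $\alpha_1+\alpha_2$ is odd.
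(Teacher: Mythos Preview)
Your overall plan coincides with the paper's: establish multiplicativity via the Chinese Remainder Theorem, evaluate at prime powers case by case using Lemma~\ref{lemdsten}, and assemble the global bound from the local formulas. The treatment of $\ell=2$, $\ell=p$, and $\ell\nmid pf_1f_2$ is in line with the paper, which in the last case reduces to a single sum differing from the quantity $C_{N,f}(\ell^\alpha)$ of \cite[Lemma 11]{CDES:1} only in the exponent on the Legendre symbol.

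There is, however, a genuine gap in your handling of the case $\ell\mid f_1f_2$. First, the case labels are transposed: the nonvanishing configurations are $\alpha_2=0$ with $\nu_\ell(f_1^2)\geq\nu_\ell(f_2^2)$ and $\alpha_1=0$ with $\nu_\ell(f_2^2)\geq\nu_\ell(f_1^2)$, not the other way around. More seriously, the mechanism you invoke is incorrect: it is not true that $\cpf=0$ when $\ell\neq p$ and $\ell$ fails to divide the selected $f$; for odd $\ell\nmid f$ one has $\nu_\ell(4f^2)=0$ and the count is trivially~$1$, while Lemma~\ref{lemdsten} gives $\cpf=1+\bigl(\tfrac{p(p-1)^2}{\ell}\bigr)$ when $\ell\mid f$ and $\ell\neq p$. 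The vanishing in the ``otherwise'' cases has a different source. Assuming without loss of generality that $(a,f,n)=(a_1,f_1,\ell^{\alpha_1})$, one factors out $\cpf$ and is left with the double sum
\[
\sum_{a_1\in(\ZZ/\ell^{\alpha_1}\ZZ)^*}\Bigl(\tfrac{a_1}{\ell}\Bigr)^{\alpha_1}
\sum_{\substack{a_2\in(\ZZ/\ell^{\alpha_2}\ZZ)^*\\ a_1f_1^2\equiv a_2f_2^2\ (\mathrm{mod}\ \ell^{\alpha_2+\nu_\ell(f_2^2)})}}\Bigl(\tfrac{a_2}{\ell}\Bigr)^{\alpha_2}.
\]
When $\alpha_2\geq 1$ the inner congruence forces $a_2\equiv a_1 h_1^2 h_2^{-2}\ell^{\nu_\ell(f_1^2)-\nu_\ell(f_2^2)}\pmod{\ell^{\alpha_2}}$, and the inner Legendre symbol picks up the factor $\bigl(\tfrac{\ell^{\nu_\ell(f_1^2)-\nu_\ell(f_2^2)}}{\ell}\bigr)^{\alpha_2}$, which vanishes unless $\nu_\ell(f_1^2)=\nu_\ell(f_2^2)$. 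The remaining vanishing in the parity-odd cases then comes from orthogonality of $\sum_{a_1}\bigl(\tfrac{a_1}{\ell}\bigr)^{\alpha_1+\alpha_2}$. Your proposal does not identify either of these two mechanisms, so as written the argument for this case would not go through.
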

The next step is to extend the sums in $\eqref{mainsone}$ to sums over all integers. We first focus on bounding the inner sum. We note that if $((n_1,n_2),(f_1^2,f_2^2))=1$ then $(n_1f_1^2,n_2f_2^2)=(n_1,n_2)(f_1^2,f_2^2)$. Otherwise, there exists a prime $\ell$ such that $\ell\mid ((n_1,n_2),(f_1^2,f_2^2))$. If $C_{p,f}(\ell^{\nu_\ell(n_1)},\ell^{\nu_\ell(n_2)})\neq 0$ then from Lemma \ref{lemdseleven} either $\nu_\ell(f_1^2)=\nu_\ell(f_2^2)$ or $\ell \nmid (n_1,n_2)$ and thus $(n_1f_1^2,n_2f_2^2)=(n_1,n_2)(f_1^2,f_2^2)$. This gives the bound
$$\frac{1}{\varphi(L)}=\frac{(4n_1f_1^2,4n_2f_2^2)}{\varphi(16n_1n_2f_1^2f_2^2)}=\frac{(4n_1,4n_2)(f_1^2,f_2^2)}{\varphi(16n_1n_2f_1^2f_2^2)}\ll
\frac{(n_1,n_2)(f_1^2,f_2^2)}{\varphi(n_1n_2)\varphi(f_1^2)\varphi(f_2^2)}.$$ Hence, from Lemma \ref{lemdseleven} we have that
\begin{equation}
\frac{C_{p,f}(n_1,n_2)}{\varphi(L)}\ll\frac{n_1n_2(f_1^2,f_2^2)}{\kappa_{2p}(n_1n_2)
\varphi(n_1n_2)\varphi(f_1^2)\varphi(f_2^2)}
\prod_{\ell \mid (f_1f_2, n_1n_2)}\cpf. \label{badphibound}
\end{equation}
From $\eqref{badphibound}$ we have that $\eqref{mainsone}$ becomes
\begin{align}
&Y\sumv\frac{\prod_{\ell \mid f_1f_2}\cpf}{f_1f_2}
\sum_{P^+(n_1),P^+(n_2)\leq y }\frac{2C_{p,f}(n_1,n_2)}{\varphi(L)n_1n_2}\prod_{\ell \mid (f_1f_2, n_1n_2)}\cpf^{-1}\nonumber\\
+&O\Bigg(Y\sumv\frac{(f_1^2,f_2^2)\prod_{\ell \mid f_1f_2}\cpf}{f_1f_2\varphi(f_1^2)\varphi(f_2^2)}\Bigg(\sum_{\substack{P^+(n_1),P^+(n_2)\leq y \\ n_1\leq y^u,n_2 > y^u}}+\sum_{\substack{P^+(n_1),P^+(n_2)\leq y \\ n_1> y^u,n_2 \leq y^u}}\nonumber\\
+&\sum_{\substack{P^+(n_1),P^+(n_2)\leq y \\ n_1,n_2 > y^u}}\Bigg)\frac{1}{\kappa_{2p}(n_1n_2)\varphi(n_1n_2)}\Bigg).\label{perrorsix}
\end{align}

We now consider the three inner sums in the error term of $\eqref{perrorsix}$. Let $d:=(n_1,n_2)$ and write $n_1=dn_1',n_2=dn_2'$. Since  $\nu_\ell(d^2n_1'n_2')\equiv \nu_\ell(n_1'n_2')\imod{2}$ we have that $\kappa_{2p}(d^2n_1'n_2')=\kappa_{2p}(n_1'n_2')= \kappa_{2p}(n_1')\kappa_{2p}(n_2')$. Thus, breaking up the three inner sums in the error term in $\eqref{perrorsix}$ into sums over $d$ gives
\begin{align}
&\Bigg(\sum_{\substack{P^+(n_1),P^+(n_2)\leq y \\ n_1\leq y^u,n_2 > y^u}}+\sum_{\substack{P^+(n_1),P^+(n_2)\leq y \\ n_1> y^u,n_2 \leq y^u}}+\sum_{\substack{P^+(n_1),P^+(n_2)\leq y \\ n_1,n_2 > y^u}}\Bigg) \frac{1}{\kappa_{2p}(n_1n_2)\varphi(n_1n_2)}\nonumber\\
\leq&\Bigg(\sum_{d\leq y^u}\frac{1}{\varphi(d)^2}\sum_{\substack{n_1'\leq \frac{y^u}{d} \\ n_2' > \frac{y^u}{d}}}+\sum_{d\leq y^u}\frac{1}{\varphi(d)^2}\sum_{\substack{n_1'> \frac{y^u}{d} \\ n_2' \leq \frac{y^u}{d}}}+\sum_{d\geq 1}\frac{1}{\varphi(d)^2}\sum_{n_1',n_2'> \frac{y^u}{d}}\Bigg) \frac{1}{\kappa_{2p}(n_1')\kappa_{2p}(n_2')\varphi(n_1')\varphi(n_2')}\nonumber\\
\leq&\Bigg(\sum_{d\leq y^u}\frac{1}{\varphi(d)^2}\sum_{\substack{n_1'\leq \frac{y^u}{d} \\ n_2' > \frac{y^u}{d}}}+\sum_{d\leq y^u}\frac{1}{\varphi(d)^2}\sum_{\substack{n_1'> \frac{y^u}{d} \\ n_2' \leq \frac{y^u}{d}}}\Bigg)\frac{1}{\kappa_{2p}(n_1')\kappa_{2p}(n_2')\varphi(n_1')\varphi(n_2')}\nonumber\\
+&\sum_{d\leq y^u}\frac{1}{\varphi(d)^2}\left(\sum_{n'> \frac{y^u}{d}} \frac{1}{\kappa_{2p}(n')\varphi(n')}\right)^2+\sum_{d> y^u }\frac{1}{\varphi(d)^2}\left(\sum_{n'\geq 1} \frac{1}{\kappa_{2p}(n')\varphi(n')}\right)^2 .
\label{kappaboundone}
\end{align}

From Lemma \ref{lemdseight} and partial summation we have that $\eqref{kappaboundone}$ is bounded by
\begin{equation*}
\sum_{d\leq y^u}\left(\frac{1}{\varphi(d)^2}\frac{\sqrt{d}}{y^{\frac{u}{2}}}+\frac{1}{\varphi(d)^2}\frac{d}{y^{u}}\right)+\sum_{d> y^u }\frac{1}{\varphi(d)^2} \ll \frac{u(\log y)(\log \log y^u)^2}{y^u}.
\end{equation*}
By Lemma \ref{lemdsten} there exists an $\epsilon>0$ such that
$$\prod_{\ell \mid f_1f_2 }\cpf \ll \prod_{\ell \mid f_1f_2}2 = 2^{\omega(f_1f_2)} \ll 2^\frac{\log f_1f_2}{\log \log f_1f_2}\ll f_1^\epsilon f_2^\epsilon,$$
where $\omega(n)$ denotes the number of prime divisors of $n$ and thus from $\eqref{perrorsix}$ we have that $\eqref{mainsone}$ becomes
\begin{multline}
Y\sumv\frac{\prod_{\ell \mid f_1f_2}\cpf}{f_1f_2}
\sum_{\substack{P^+(n_1)\leq y\\ P^+(n_2)\leq y}}\frac{2C_{p,f}(n_1,n_2)}{\varphi(L)n_1n_2}\prod_{\ell \mid (f_1f_2, n_1n_2)}\cpf^{-1}\\
+O\left(Y\frac{u(\log y)(\log \log y^u)^2}{y^u}\right) \label{newestsumone}.
\end{multline}
Now, as in $\eqref{perrorsix}$ we write $\eqref{newestsumone}$ as 
\begin{align}
&\sumv\frac{\prod_{\ell \mid f_1f_2}\cpf}{f_1f_2}
\sum_{n_1,n_2\geq 1}\frac{2C_{p,f}(n_1,n_2)}{\varphi(L)n_1n_2}\prod_{\ell \mid (f_1f_2, n_1n_2)}\cpf^{-1}\nonumber\\
+&O\Bigg(Y\sumv\frac{(f_1^2,f_2^2)\prod_{\ell \mid f_1f_2}\cpf}{f_1f_2\varphi(f_1^2)\varphi(f_2^2)}\Bigg(
\sum_{\substack{P^+(n_1)>y \\ P^+(n_2)\leq y}}+\sum_{\substack{P^+(n_1)\leq y \\ P^+(n_2)> y}}\nonumber\\
+&\sum_{\substack{P^+(n_1),P^+(n_2)> y}}\Bigg)\frac{1}{\kappa_{2p}(n_1n_2)\varphi(n_1n_2)}+Y\frac{u(\log y)(\log \log y^u)^2}{y^u}\Bigg)\label{newestsumtwo}.
\end{align}
Following analogously to $\eqref{kappaboundone}$ we have that the three inner sums in the error term in $\eqref{newestsumtwo}$ are bounded by 
\begin{equation}
\Bigg(\sum_{\substack{P^+(n_1)>y \\ P^+(n_2)\leq y}}+\sum_{\substack{P^+(n_1)\leq y \\ P^+(n_2)> y}}+\sum_{\substack{P^+(n_1),P^+(n_2)> y}}\Bigg)\frac{1}{\kappa_{2p}(n_1n_2)\varphi(n_1n_2)} \ll \frac{1}{\sqrt{y}}.\label{pplusboundtwo}
\end{equation}
Thus, from $\eqref{pplusboundtwo}$ we can write $\eqref{newestsumtwo}$ as
 \begin{align}
&Y\sumfs\frac{\prod_{\ell \mid f_1f_2}\cpf}{f_1f_2}
\sum_{n_1,n_2\geq 1}\frac{2C_{p,f}(n_1,n_2)}{\varphi(L)n_1n_2}\prod_{\ell \mid (f_1f_2, n_1n_2)}\cpf^{-1}\nonumber\\
+&O\Bigg(Y\Bigg(\sum_{\substack{f_1\leq V\\f_2> V}}+\sum_{\substack{f_1> V\\f_2\leq V}}+\sum_{f_1,f_2> V}\Bigg)\frac{(f_1^2,f_2^2)\prod_{\ell \mid f_1f_2}\cpf}{f_1f_2\varphi(f_1^2)\varphi(f_2^2)}\sum_{n_1,n_2\geq 1}\frac{1}{\kappa_{2p}(n_1n_2)\varphi(n_1n_2)}\nonumber\\
+&Y\frac{(\log \log y^u)^2}{y^u}+\frac{Y}{\sqrt{y}}\Bigg).\label{newestsumthree}
\end{align}
Following as above, we have that there exists an $\epsilon>0$ such that 
\begin{equation}
\Bigg(\sum_{\substack{f_1\leq V\\f_2> V}}+\sum_{\substack{f_1> V\\f_2\leq V}}+\sum_{f_1,f_2> V}\Bigg)\frac{(f_1^2,f_2^2)\prod_{\ell \mid f_1f_2}\cpf}{f_1f_2\varphi(f_1^2)\varphi(f_2^2)}\sum_{n_1,n_2\geq 1}\frac{1}{\kappa_{2p}(n_1n_2)\varphi(n_1n_2)}\ll\frac{(\log \log V)}{V^{1-\epsilon}}.\label{pplusboundthree}
\end{equation}

Combining $\eqref{newestsumthree}$ and $\eqref{pplusboundthree}$ with $\eqref{psumfive}$ gives
\begin{align*}
S_1=& Y\sumfs\frac{\prod_{\ell \mid f_1f_2}\cpf}{f_1f_2}
\sum_{n_1,n_2\geq 1}\frac{2C_{p,f}(n_1,n_2)}{\varphi(L)n_1n_2}\prod_{\ell \mid (f_1f_2, n_1n_2)}\cpf^{-1}\nonumber\\
&+O_\alpha\Bigg(\frac{Y(\log \log 4p)^2}{(\log 4p)^{\alpha-3}}+\frac{Yu(\log p)^3(\log y)^2}{e^u} +\frac{\sqrt{p}(\log p)u^2(\log y)^2}{V^{\frac{1}{4}}}+\frac{Y}{\sqrt{y}}+\frac{Y(\log \log V)}{V^{1-\epsilon}}\\
&+y^{2u}(\log V)^2(u^2(\log y)^2+\log p)+\sum_{\substack{f_1,f_2\leq V}}f_1f_2\sum_{\substack{n_1,n_2 \leq y^u}}E(X,Y;L)+ \frac{Y(\log \log y^u)^2}{y^u}\Bigg).
\end{align*}
Choosing $$V:=(\log 4p)^{4\nu+4\gamma+12}\quad \text{and} \quad u:=\frac{\log 4p}{(\log \log 4p)^2},$$ yields
\begin{align}
S_1=&Y\sumfs\frac{\prod_{\ell \mid f_1f_2}\cpf}{f_1f_2}
\sum_{n_1,n_2\geq 1}\frac{2C_{p,f}(n_1,n_2)}{\varphi(L)n_1n_2}\prod_{\ell \mid (f_1f_2, n_1n_2)}\cpf^{-1}\nonumber\\
&+O_\alpha\left(\frac{Y}{(\log p)^\gamma}+\sum_{\substack{f_1,f_2\leq (\log 4p)^{4\nu+4\gamma+12}}}f_1f_2\sum_{\substack{n_1,n_2 \leq p^\epsilon}}E(X,Y;L) \right).\label{sonegooderrors}
\end{align}

We now show that the sums in the main term of $\eqref{sonegooderrors}$ converge. Recall that $L=4[n_1f_1^2,n_2f_2^2]$. From the properties of the Euler $\varphi$-function we have that
$$\frac{1}{f_1f_2}\frac{1}{n_1n_2\varphi(L)}=\frac{1}{f_1^2\varphi(f_1)f_2^2\varphi(f_2)}\frac{\varphi((n_1,f_1))\varphi((n_2,f_2))
\varphi((4n_1f_1^2,4n_2f_2^2))}{n_1\varphi(4n_1)(n_1,f_1)n_2\varphi(4n_2)(n_2,f_2)}.$$
To simplify our notation, we define the following multiplicative functions, 
$$g_1(f_i):= \frac{1}{f_i^2\varphi(f_i)}\quad {\rm and}\quad g_2(n_i,f_i):= 
\frac{\varphi((n_i,f_i))}{n_i\varphi(4n_i)(n_i,f_i)}\quad {\rm for}\:i=1,2.$$ Then we rewrite the main term of $\eqref{sonegooderrors}$ as $R(p)Y$ where
\begin{multline}
R(p):=\sumfs g_1(f_1)g_1(f_2)\prod_{\ell \mid f_1f_2} \cpf\sum_{\substack{n_1, n_2\geq 1}}2g_2(n_1,f_1)g_2(n_2,f_2)\\
 \times\varphi((4n_1f_1^2,4n_2f_2^2))C_{p,f}(n_1,n_2)\prod_{\ell \mid (f_1f_2,n_1n_2)}\cpf^{-1}. \label{usefulkp}
\end{multline}

Let $n_1:=n_1' n_1'', n_2 :=n_2' n_2''$. Then we say that the function $h(n_1,n_2)$ is \textit{multiplicative} if $h(n_1,n_2)=h(n_1',n_2')h(n_1'',n_2'')$ when $(n_1' n_2',n_1'' n_2'')=1$. From Lemma \ref{lemdseleven} we have that the functions of $n_1$ and $n_2$ are multiplicative, so the sum over $n_1,n_2$ in $\eqref{usefulkp}$ becomes
 \begin{align}
 \varphi((f_1^2,f_2^2))&\left\{2\sum_{\alpha_1,\alpha_2\geq 0}g_2(2^{\alpha_1},f_1)g_2(2^{\alpha_2},f_2)\varphi((2^{2+\alpha_1},2^{2+\alpha_2}))C_{p,f}(2^{\alpha_1},2^{\alpha_2})\right\}\nonumber\\
 \times&\left\{4\sum_{\alpha_1,\alpha_2\geq 0}g_2(p^{\alpha_1},f_1)g_2(p^{\alpha_2},f_2)\varphi((p^{\alpha_1},p^{\alpha_2}))C_{p,f}(p^{\alpha_1},p^{\alpha_2})\right\}\nonumber\\
 \times\prod_{\ell\nmid 2pf_1f_2}&\left\{4\sum_{\alpha_1,\alpha_2\geq 0}g_2(\ell^{\alpha_1},f_1)g_2(\ell^{\alpha_2},f_2)\varphi((\ell^{\alpha_1},\ell^{\alpha_2}))C_{p,f}(\ell^{\alpha_1},\ell^{\alpha_2})\right\}\nonumber\\
\times \prod_{\substack{\ell\mid f_1f_2 \\ \ell \neq p}}&\Bigg\{1+4\sum_{\substack{\alpha_1,\alpha_2\geq 0\\ \alpha_1+\alpha_2>0}}g_2(\ell^{\alpha_1},f_1)g_2(\ell^{\alpha_2},f_2)\frac{\varphi((\ell^{\alpha_1}f_1^2,\ell^{\alpha_2}f_2^2))C_{p,f}(\ell^{\alpha_1},\ell^{\alpha_2})}{\varphi((f_1^2,f_2^2))\cpf}\Bigg\},\label{bigkzeropone}
 \end{align}
by factoring out $\varphi((f_1^2,f_2^2))$ from each product in $\eqref{bigkzeropone}$.

We now invoke Lemma \ref{lemdseleven} and consider each term in $\eqref{bigkzeropone}$ separately. For the first term we have that $\displaystyle{g_2(2^{\alpha_i},f_i)=\frac{1}{2^{2\alpha_i+1}}}$ for $\alpha_i\geq 0$ and thus the first term in \eqref{bigkzeropone} becomes
\begin{equation}\label{kpelltwo}
1+2\sum_{\alpha\geq 1}\left[\frac{-1}{2}\right]^\alpha+\sum_{\alpha_1\geq 1}\left[\frac{-1}{2}\right]^{\alpha_1}\sum_{\alpha_2\geq 1}\left[\frac{-1}{2}\right]^{\alpha_2}=\frac{4}{9}.
\end{equation}
For the second term we have that $$g_2(p^{\alpha_i},f_i)=\begin{cases}\frac{1}{2} &\:{\rm if}\: \alpha_i=0, \\ \frac{p}{2p^{2\alpha_i}(p-1)} & \:{\rm if}\: \alpha_i> 0,\end{cases}$$
and thus the second sum in \eqref{bigkzeropone} becomes
\begin{align}
&1+\frac{2(p-2)}{p-1}\sum_{\alpha\geq 1}\left[\frac{1}{p}\right]^\alpha+\frac{p-2}{p-1}\sum_{\alpha_1\geq 1}\left[\frac{1}{p}\right]^{\alpha_1}\sum_{\alpha_2\geq 1}\left[\frac{1}{p}\right]^{\alpha_2}=1+\frac{(p-2)(2p-1)}{(p-1)^3}.\label{kpellp}
\end{align}
For the third sum, if $\ell\nmid 2pf_1f_2$ then $$g_2(\ell^{\alpha_i},f_i)=\begin{cases}\frac{1}{2} &\:{\rm if}\: \alpha_i=0, \\ \frac{\ell}{2\ell^{2\alpha_i}(\ell-1)} & \:{\rm if}\: \alpha_i> 0,\end{cases}$$
and thus the third sum in \eqref{bigkzeropone} becomes
\begin{align}
&1+\frac{2}{\ell-1}\sum_{\alpha\geq 1}\left[\frac{1}{\ell}\right]^\alpha\left[-1-\left(\frac{p-1}{\ell}\right)^2+\left[\ell-\left(\frac{p}{\ell}\right)\right]\left[\frac{1+(-1)^{\alpha}}{2}\right]\right]\nonumber \\
+&\frac{1}{\ell-1}\sum_{\alpha_1\geq 1}\left[\frac{1}{\ell}\right]^{\alpha_1}\sum_{\alpha_2\geq 1}\left[\frac{1}{\ell}\right]^{\alpha_2}\left[-1-\left(\frac{p-1}{\ell}\right)^2+\left[\ell-\left(\frac{p}{\ell}\right)\right]\left[\frac{1+(-1)^{\alpha_1+\alpha_2}}{2}\right]\right]\nonumber\\
=&1-\frac{(2\ell^3+3\ell^2-1)\left(\frac{p-1}{\ell}\right)^2+(3\ell^2-1)\left(\frac{p}{\ell}\right)-\ell^3+3\ell^2+\ell-1}{(\ell-1)(\ell^2-1)^2}.\label{kpellnmidf}
\end{align}
For the fourth term, if $\ell\mid f_1f_2$ then
 $$g_2(\ell^{\alpha_i},f_i)=\begin{cases}\frac{1}{2} &\:{\rm if}\: \alpha_i=0, \\ \frac{1}{2\ell^{2\alpha_i}} & \:{\rm if}\: \alpha_i> 0,\end{cases}$$ and thus the last term in \eqref{bigkzeropone} becomes
\begin{align}
&1+\sum_{\substack{\alpha_1,\alpha_2\geq 0 \\ \alpha_1+\alpha_2>0}}\left[\frac{1}{\ell^2}\right]^{\alpha_1+\alpha_2}\frac{\varphi((\ell^{\alpha_1} f_1^2,\ell^{\alpha_2}f_2^2))C_{p,f}(\ell^{\alpha_1},\ell^{\alpha_2})}{\varphi((f_1^2,f_2^2))\cpf}.\label{kpellmidfg}
\end{align}

From Lemma \ref{lemdseleven}, we must consider two cases. The first case is if $\min\{\alpha_1,\alpha_2\}=0$, the second case is if $\min\{\alpha_1,\alpha_2\}>0$. In the first case, if $\nu_\ell(f_1^2)>\nu_\ell(f_2^2)$ then we have that  $(\ell^{\alpha_1}f_1^2,f_2^2)=(f_1^2,f_2^2)$ and similarly if $\nu_\ell(f_2^2)> \nu_\ell(f_1^2)$ then  $(f_1^2,\ell^{\alpha_2}f_2^2)=(f_1^2,f_2^2).$
Thus, in this case $\eqref{kpellmidfg}$ becomes $$1+\frac{\ell-1}{2\ell}\sum_{\alpha\geq 1}\left[\frac{1}{\ell}\right]^\alpha\left(1+(-1)^\alpha\right)=1+\frac{1}{\ell(\ell+1)}.$$
If $\nu_\ell(f_1^2)=\nu_\ell(f_2^2)$, since $\ell\mid f_1f_2$, when $\alpha_1,\alpha_2\geq 1$ we have that $\varphi((\ell^{\alpha_1} f_1^2,\ell^{\alpha_2}f_2^2))=\ell^{\min\{\alpha_1,\alpha_2\}}\varphi((f_1^2,f_2^2))$. Hence, in this case $\eqref{kpellmidfg}$ becomes
\begin{align*}
&1+\frac{\ell-1}{\ell}\sum_{\alpha\geq 1}\left[\frac{1}{\ell}\right]^\alpha\left(1+(-1)^\alpha\right)+\frac{\ell-1}{2\ell}\sum_{\alpha_1,\alpha_2 \geq 1}\left[\frac{1}{\ell}\right]^{\alpha_1+\alpha_2}\left(1+(-1)^{\alpha_1+\alpha_2}\right) \nonumber \\
=&1+\frac{3\ell^2-1}{\ell(\ell-1)(\ell+1)^2}.
\end{align*}

By combining $\eqref{kpelltwo},\eqref{kpellp},\eqref{kpellnmidf}$ and $\eqref{kpellmidfg}$ we have that the sum over $n_1,n_2$ in $\eqref{usefulkp}$ becomes
\begin{equation}\label{thensumforkp}
\frac{4}{9}\varphi((f_1^2,f_2^2))F_0(p)\prod_{\ell \nmid 2pf_1f_2}F_1(\ell)\prod_{\substack{\ell \mid f_1f_2\\ \ell \neq p}}F_2(\ell,f_1,f_2),
\end{equation}
where, for any odd prime $\ell$, we make the definitions
\begin{align*}
F_0(p)&:=1+\frac{(p-2)(2p-1)}{(p-1)^3},\\
F_1(\ell)&:= 1-\frac{(2\ell^3+3\ell^2-1)\left(\frac{p-1}{\ell}\right)^2+(3\ell^2-1)\left(\frac{p}{\ell}\right)-\ell^3+3\ell^2+\ell-1}{(\ell-1)(\ell^2-1)^2},\\
F_2(\ell,f_1,f_2)&:=\begin{cases} 
1+\frac{1}{\ell(\ell+1)} & \text{ if }\nu_\ell(f_1^2)\neq \nu_\ell(f_2^2), \\ 
1+\frac{3\ell^2-1}{\ell(\ell-1)(\ell+1)^2}& \text{ if }\nu_\ell(f_1^2)= \nu_\ell(f_2^2). 
\end{cases}
\end{align*}
Hence, $\eqref{usefulkp}$ becomes
\begin{equation}
R(p)=\frac{4}{9}F_0(p)\prod_{\ell \neq 2,p}F_1(\ell)\sumfs g_1(f_1)g_1(f_2)\varphi((f_1^2,f_2^2))\prod_{\ell \mid f_1f_2} \cpf\frac{F_2(\ell,f_1,f_2)}{F_1(\ell)}. \label{kzeropnew}
\end{equation}

We have that $$g_1(\ell^{\alpha_i})=\begin{cases}1 &\:{\rm if}\: \alpha_i=0, \\ \frac{\ell}{\ell^{3\alpha_i}(\ell-1)} & \:{\rm if}\: \alpha_i> 0,\end{cases}$$ so the sum in $\eqref{kzeropnew}$ over $f_1,f_2$ may be factored as
\begin{align}
\prod_{\ell\neq 2,p}\Bigg\{1+&\frac{\ell\left[1+\left(\frac{p(p-1)^2}{\ell}\right)\right]}{(\ell-1)F_1(\ell)}
\Bigg[\sum_{\alpha\geq 1}\left[\frac{1}{\ell^3}\right]^\alpha \left(F_2(\ell,\ell^\alpha,1)+F_2(\ell,1,\ell^\alpha)\right)\nonumber\\
+&\sum_{\alpha_1\geq 1}\left[\frac{1}{\ell^3}\right]^{\alpha_1}\sum_{\alpha_2\geq 1}\left[\frac{1}{\ell^3}\right]^{\alpha_2}\ell^{2\min\{\alpha_1,\alpha_2\}}F_2(\ell,\ell^{\alpha_1}, \ell^{\alpha_2})\Bigg]\Bigg\}\nonumber\\
=\prod_{\ell\neq 2,p}\Bigg\{1+&\frac{\left[1+\left(\frac{p(p-1)^2}{\ell}\right)\right]}{F_1(\ell)}
\Bigg[\frac{2\ell^3+3\ell^2-\ell-1}{(\ell^2-1)^3}\Bigg]\Bigg\}.\label{kzeropnewtwo}
\end{align}
Combining $\eqref{kpellnmidf}, \eqref{kzeropnew}$ and $\eqref{kzeropnewtwo}$ gives the equation 
\begin{multline}
R(p)=\frac{4}{9}F_0(p)\prod_{\ell\neq 2,p}\Bigg(1-\frac{(2\ell^4+3\ell^3)\left(\frac{p-1}{\ell}\right)^2+\ell^3\left(\frac{p}{\ell}\right)-\ell^4+2\ell^3+4\ell^2-1}{(\ell^2-1)^3}\\
+\frac{(2\ell^3+3\ell^2-\ell-1)\left(\left(\frac{p-1}{\ell}\right)^2+\left(\frac{p}{\ell}\right)-1-\left(\frac{p(p-1)^2}{\ell}\right)\right)}{(\ell^2-1)^3}\Bigg).\label{ctwopcont}
\end{multline}
Note that since $\ell\neq p$ we have that $$\left(\frac{p-1}{\ell}\right)^2+\left(\frac{p}{\ell}\right)-1-\left(\frac{p(p-1)^2}{\ell}\right)=0,$$ and hence $\eqref{ctwopcont}$ becomes
\begin{equation*}
R(p)=\frac{4}{9}F_0(p)\prod_{\ell\neq 2,p}\Bigg(1-\frac{(2\ell^4+3\ell^3)\left(\frac{p-1}{\ell}\right)^2+\ell^3\left(\frac{p}{\ell}\right)-\ell^4+2\ell^3+4\ell^2-1}{(\ell^2-1)^3}\Bigg).
\end{equation*}
Since $$F_0(p)\left(1-\frac{2p^4+3p^3-p^4+2p^3+4p^2-1}{(p^2-1)}\right)^{-1}=1+O\left(\frac{1}{p^2}\right),$$ we have that $R(p)=C_2(p)(1+O(p^{-2}))$ and the result follows.
\end{proof}

\section{Proof of Lemma \ref{lemdseleven}}
We now provide the proof of Lemma \ref{lemdseleven} stated in the previous section. The function $C_{p,f}(n_1,n_2)$ is very similar to the function $C_{N,f}(n)$ considered in  \cite[Lemma 11]{CDES:1} and we will make frequent reference to their paper in the following proof. For the duration of this section let $a_1,a_2,f_1,f_2,n_1,n_2$ be integers such that $f_1$ and $f_2$ are odd and recall that $L=4[n_1f_1^2,n_2f_2^2]$.

\begin{proof}(Proof of Lemma \ref{lemdseleven})
We first show that $C_{p,f}(n_1,n_2)$ is multiplicative in two variables. Let $n_1:=n_1' n_1'', n_2 :=n_2' n_2''$. Recall the function $C_{p,f}(n_1,n_2)$ is multiplicative if $C_{p,f}(n_1,n_2)=C_{p,f}(n_1',n_2')C_{p,f}(n_1'',n_2'')$ when $(n_1' n_2',n_1'' n_2'')=1$ and $C_{p,f}(1,1)=1$.

Define $(a',f,n')$ and $(a'',f,n'')$ analogously to $\eqref{afn}$. Now, suppose that $(n_1' n_2',n_1'' n_2'')=1$, then we can assume that at least one of $n_1' n_2'$ and $ n_1'' n_2'' $ is odd. Without loss of generality, we assume that $n_1' n_2'$ is odd and hence $n'$ is also odd. Therefore there exist integers $h_1',h_1'', h_2',h_2'',h',h''$ such that
\begin{equation} \label{cpmult}
4n_1'' h_1''+n_1' h_1'= 4n_2'' h_2''+n_2' h_2'=4n'' h''+n' h'=1.
\end{equation}
Since we assumed that $n_1'n_2'$ is odd, we have that $S^{(2)}(a',n')=1$ and by definition,
\begin{align}
&C_{p,f}(n_1',n_2')C_{p,f}(n_1'',n_2'')\nonumber\\
=&\sum_{a_1'\in(\ZZ/n_1'\ZZ)^*}\left(\frac{a_1'}{n_1'}\right)\sum_{\substack{a_2'\in(\ZZ/n_2'\ZZ)^* \\  a_1'f_1^2\equiv a_2'f_2^2\imod{(4n_1'f_1^2,4n_2'f_2^2)}}}\left(\frac{a_2'}{n_2'}\right)\prod_{\substack{\ell\mid n_1'n_2' \\ \ell \neq 2}}C_p^{(\ell)}(a',f,n')\nonumber\\
\times&\sum_{\substack{a_1''\in(\ZZ/4n_1''\ZZ)^* \\ a_1''\equiv 1\imod{4}}}\left(\frac{a_1''}{n_1''}\right)\sum_{\substack{a_2''\in(\ZZ/4n_2''\ZZ)^* \\ a_2''\equiv 1\imod{4} \\  a_1''f_1^2\equiv a_2''f_2^2\imod{(4n_1''f_1^2,4n_2''f_2^2)}}}\left(\frac{a_2''}{n_2''}\right)S^{(2)}(a'',n'')
\prod_{ \ell \mid n_1''n_2'' }C_p^{(\ell)}(a'',f,n'').\label{cpfeqone}
\end{align}
From $\eqref{cpmult}$ we have that $a_1'=a_1'(4n_1'' h_1''+n_1' h_1')$, and hence
$$\left(\frac{a_1'}{n_1'}\right)=\left(\frac{a_1'(4n_1'' h_1''+n_1' h_1')}{n_1'}\right)=\left(\frac{a_1'4n_1'' h_1''}{n_1'}\right)=\left(\frac{a_1'4n_1'' h_1''+a_1''n_1' h_1'}{n_1'}\right).$$
Similarly, we have that
$$\left(\frac{a_1''}{n_1''}\right)=\left(\frac{a_1'4n_1'' h_1''+a_1''n_1' h_1'}{n_1''}\right),$$
 and the analogous results for $a_2'$ and $a_2''$. 
 
We assumed that $(n',n'')=1$ and we have that $n_1'\equiv n_2'\equiv 0 \imod{(4n_1'f_1^2,4n_2'f_2^2)}.$ Thus, 
 \begin{align*}
 a_1'f_1^2&=a_1'(4n_1'' h_1''+n_1' h_1')f_1^2\equiv (a_1'4n_1'' h_1'')f_1^2\equiv(a_1'4n_1'' h_1''+a_1''n_1'h_1')f_1^2\\
 &\equiv(a_2'4n_2'' h_2''+a_2''n_2'h_2')f_2^2\imod{(4n_1'n_1''f_1^2,4n_2'n_2''f_2^2)}.
 \end{align*}
Hence, the right hand side of $\eqref{cpfeqone}$ becomes
\begin{align*}
&\sum_{\substack{a_1'\in(\ZZ/n_1'\ZZ)^* \\ a_1''\in(\ZZ/4n_1''\ZZ)^* \\ a_1''\equiv 1\imod{4}}}\left(\frac{a_1'4n_1''h_1''+a_1''n_1'h_1'}{n_1' n_1''}\right)
\sum_{\substack{a_2'\in(\ZZ/n_2'\ZZ)^*,  a_2''\in(\ZZ/4n_2''\ZZ)^* \\ a_2''\equiv 1\imod{4} \\  (a_1'4n_1'' h_1''+a_1''n_1'h_1')f_1^2\equiv(a_2'4n_2'' h_2''+a_2''n_2'h_2')f_2^2\imod{(4n_1'n_1''f_1^2,4n_2'n_2''f_2^2)}}}\\
\times&\left(\frac{a_2'4n_2''h_2''+a_2''n_2'h_2'}{n_2' n_2''}\right)S^{(2)}(a'4n''h''+a''n'h',n' n'')\prod_{\ell \mid n_1'n_1''n_2'n_2''}C_p^{(\ell)}(a'4n''h''+a''n'h',f,n' n'')\\
=&C_{p,f}(n_1'n_1'',n_2'n_2''),
\end{align*}
and thus $C_{p,f}(n_1,n_2)$ is multiplicative.

Now let $\alpha_i:=\nu_\ell(n_i)$ for $i=1,2$ and consider the sum $C_{p,f}(\ell^{\alpha_1},\ell^{\alpha_2})$. 
Without loss of generality it suffices to consider the case when $(a,f,n)=(a_1,f_1,\ell^{\alpha_1})$ since $C_{p,f}(\ell^{\alpha_1},\ell^{\alpha_2})$ is symmetric.

We first consider $C_{p,f}(2^{\alpha_1},2^{\alpha_2})$, when $\alpha_1\geq 1$ and $\alpha_1\geq \alpha_2$. From Lemma \ref{lemdsten} we have that 
\begin{align*}
C_{p,f}(2^{\alpha_1},2^{\alpha_2})&=2\sum_{\substack{a_1\in(\ZZ/2^{\alpha_1+2}\ZZ)^* \\ a_1\equiv 5\imod{8}}}\left(\frac{a_1}{2^{\alpha_1}}\right)
\sum_{\substack{a_2\in(\ZZ/2^{\alpha_2+2}\ZZ)^* \\ a_2\equiv 1\imod{4} \\ a_1f_1^2\equiv a_2f_2^2 \imod{2^{2+\alpha_2}(f_1^2,f_2^2)}}}\left(\frac{a_2}{2^{\alpha_2}}\right)\\
&=2\sum_{\substack{a_1\in(\ZZ/2^{\alpha_1+2}\ZZ)^* \\ a_1\equiv 5\imod{8}}}\left(\frac{a_1}{2}\right)^{\alpha_1+\alpha_2}=2^{\alpha_1}\sum_{\substack{a_1\in(\ZZ/8\ZZ)^* \\ a_1\equiv 5\imod{8}}}\left(\frac{a_1}{2}\right)^{\alpha_1+\alpha_2}=2^{\alpha_1}(-1)^{\alpha_1+\alpha_2}.
\end{align*}

Next we consider when $\ell\nmid 2f_1f_2$. In this case, 
\begin{align*}
C_{p,f}(\ell^{\alpha_1},\ell^{\alpha_2})=&\sum_{\substack{a_1\in(\ZZ/4\ell^{\alpha_1}\ZZ)^* \\ a_1\equiv 1\imod{4}}}\left(\frac{a_1}{\ell^{\alpha_1}}\right)
C_p^{(\ell)}(a_1,f_1,\ell^{\alpha_1})\sum_{\substack{a_2\in(\ZZ/4\ell^{\alpha_2}\ZZ)^* \\ a_2\equiv 1\imod{4}\\  a_1f_1^2\equiv a_2f_2^2
\imod{4(f_1^2,f_2^2)\ell^{\alpha_2}}}}\left(\frac{a_2}{\ell^{\alpha_2}}\right).
\end{align*}
Since $\ell \nmid f_1f_2$, we have that the inner sum above becomes
$$\sum_{\substack{a_2\in(\ZZ/4\ell^{\alpha_2}\ZZ)^* \\ a_2\equiv 1\imod{4}\\  a_1f_1^2\equiv a_2f_2^2\imod{\ell^{\alpha_2}}}}\left(\frac{a_2}{\ell^{\alpha_2}}\right)=\left(\frac{a_1}{\ell}\right)^{\alpha_2},$$ and hence
\begin{equation}
C_{p,f}(\ell^{\alpha_1},\ell^{\alpha_2})=\sum_{\substack{a_1\in(\ZZ/4\ell^{\alpha_1}\ZZ)^* \\ a_1\equiv 1\imod{4}}}\left(\frac{a_1}{\ell}\right)^{\alpha_1+\alpha_2}C_p^{(\ell)}(a_1,f_1,\ell^{\alpha_1}).\label{cpflone}
\end{equation}

We have that the sum in $\eqref{cpflone}$ differs only by the exponent on the Legendre symbol from the quantity $C_{N,f}(\ell^\alpha)$ considered in \cite[Equation (38)]{CDES:1}. It follows that their method can be applied analogously to obtain the formula for $C_{p,f}(\ell^{\alpha_1},\ell^{\alpha_2})$ in the case $\ell \nmid 2f_1f_2$ given in the statement of the lemma.

It remains to consider the case when $\ell \mid f_1f_2$. We assumed that $(a,f,n)=(a_1,f_1,\ell^{\alpha_1})$, and therefore $(4\ell^{\alpha_1}f_1^2,4\ell^{\alpha_2}f_2^2)=4\ell^{\alpha_2+\nu_\ell(f_2^2)}(f_1^2/\ell^{\nu_\ell(f_1^2)},f_2^2/\ell^{\nu_\ell(f_2^2)}) $ and in this case
\begin{align}
C_{p,f}(\ell^{\alpha_1},\ell^{\alpha_2})&=\cpf\sum_{\substack{a_1\in(\ZZ/4\ell^{\alpha_1}\ZZ)^* \\ a_1\equiv 1\imod{4}}}\left(\frac{a_1}{\ell^{\alpha_1}}\right)
\sum_{\substack{a_2\in(\ZZ/4\ell^{\alpha_2}\ZZ)^* \\ a_2\equiv 1\imod{4}\\  a_1f_1^2\equiv a_2f_2^2
\imod{4\ell^{\alpha_2+\nu_\ell(f_2^2)}(f_1^2/\ell^{\nu_\ell(f_1^2)},f_2^2/\ell^{\nu_\ell(f_2^2)})}}}\left(\frac{a_2}{\ell^{\alpha_2}}\right)\nonumber\\
&=\cpf\sum_{\substack{a_1\in(\ZZ/\ell^{\alpha_1}\ZZ)^* }}\left(\frac{a_1}{\ell^{\alpha_1}}\right)
\sum_{\substack{a_2\in(\ZZ/\ell^{\alpha_2}\ZZ)^* \\\  a_1f_1^2\equiv a_2f_2^2
\imod{\ell^{\alpha_2+\nu_\ell(f_2^2)}}}}\left(\frac{a_2}{\ell^{\alpha_2}}\right).\label{ellmidfprop}
\end{align}

If $\alpha_2=0$,  since $a_2f_2^2\equiv 0\imod{\ell^{\nu_\ell(f_2^2)}}$, for a solution to exist we must have that $a_1f_1^2\equiv 0 \imod{\ell^{\nu_\ell(f_2^2)}}$ as well. Thus, we require that  $\nu_\ell(f_2^2)\leq \nu_\ell(f_1^2)$. This gives 
\begin{align*}
C_{p,f}(\ell^{\alpha_1},1)&=\cpf\sum_{\substack{a_1\in(\ZZ/\ell^{\alpha_1}\ZZ)^* }}\left(\frac{a_1}{\ell^{\alpha_1}}\right)=\cpf \ell^{\alpha_1-1}\sum_{a_1\in(\ZZ/\ell\ZZ)^*}\left(\frac{a_1}{\ell}\right)^{\alpha_1}\\
&=\cpf\ell^{\alpha_1-1}\begin{cases} \ell-1 &\text{ if }2\mid \alpha_1 \text{ and }\nu_\ell(f_2^2)\leq \nu_\ell(f_1^2), \\ 0 & \text{ otherwise}.\end{cases}
\end{align*}

If $\alpha_2\geq 1$ then $a_1f_1^2\equiv a_2f_2^2
\imod{\ell^{\alpha_2+\nu_\ell(f_2^2)}}$ has a solution if and only if $(a_2f_2^2,\ell^{\alpha_2+\nu_\ell(f_2^2)})=\ell^{\nu_\ell(f_2^2)}\mid a_1f_1^2$. Thus, we require that
$\nu_\ell(f_2^2)\leq \nu_\ell(f_1^2)$. Now define integers $h_1$ and $h_2$ such that $f_1^2=h_1^2\ell^{\nu_\ell(f_1^2)}, f_2^2=h_2^2\ell^{\nu_\ell(f_2^2)}$ and $(h_1h_2,\ell)=1$. Then the sum over $a_2$ in $\eqref{ellmidfprop}$ becomes
\begin{align*}
\sum_{\substack{a_2\in(\ZZ/\ell^{\alpha_2}\ZZ)^* \\ a_1f_1^2\equiv a_2f_2^2
\imod{\ell^{\alpha_2+\nu_\ell(f_2^2)}}}}\left(\frac{a_2}{\ell}\right)^{\alpha_2} &= \sum_{\substack{a_2\in(\ZZ/\ell^{\alpha_2}\ZZ)^* \\ 
a_2\equiv a_1h_1^2(h_2^2)^{-1}\ell^{\nu_\ell(f_1^2)-\nu_\ell(f_2^2)}
\imod{\ell^{\alpha_2}}}}\left(\frac{a_2}{\ell}\right)^{\alpha_2}\\
&=\left(\frac{a_1\ell^{\nu_\ell(f_1^2)-\nu_\ell(f_2^2)}}{\ell}\right)^{\alpha_2}=\left(\frac{\ell^{(\nu_\ell(f_1^2)-\nu_\ell(f_2^2))}}{\ell}\right)
\left(\frac{a_1}{\ell}\right)^{\alpha_2}.
\end{align*}
Then in this case we have that
\begin{align*}
C_{p,f}(\ell^{\alpha_1},\ell^{\alpha_2})&=
\left(\frac{\ell^{(\nu_\ell(f_1^2)-\nu_\ell(f_2^2))}}{\ell}\right)\cpf\sum_{a_1\in(\ZZ/\ell^{\alpha_1}\ZZ)^*}
\left(\frac{a_1}{\ell}\right)^{\alpha_1+\alpha_2}\\
&=\cpf\ell^{\alpha_1-1}\begin{cases} \ell-1 &\text{ if }2\mid \alpha_1+\alpha_2 \text{ and } \nu_\ell(f_1^2)=\nu_\ell(f_2^2), \\ 0 & \text{ otherwise}.\end{cases}
\end{align*}

Since $C_{p,f}(n_1,n_2)$ is multiplicative, we have for a prime $\ell$ that
$$C_{p,f}(n_1,n_2)=\prod_{\ell \mid n_1n_2}C_{p,f}(\ell^{\nu_\ell(n_1)},\ell^{\nu_\ell(n_2)}).$$
Then it follows from the above results that
\begin{align*}
C_{p,f}(\ell^{\nu_\ell(n_1)},\ell^{\nu_\ell(n_2)})\ll \left(\frac{\ell^{\nu_\ell(n_1)+\nu_\ell(n_2)}}{(\ell^{\nu_\ell(n_1)},\ell^{\nu_\ell(n_2)})\kappa_{2p}
(\ell^{\nu_\ell(n_1)+\nu_\ell(n_2)})}\right)
\prod_{\ell \mid (f_1f_2, n_1n_2)}\cpf
 \end{align*}
 with an absolute constant. The result follows by taking the product over $\ell \mid n_1n_2$. 
\end{proof}

\appendix
\section{by Sumit Giri}
The goal of this appendix is to determine the average of the function $C_2(p)$, defined in $\eqref{actualkzerop}$, over the set of primes up to $X$. We state this result as follows.
\begin{thrm}\label{thrm: Giri}
	Let $C_2(p)$ be the constant defined in $\eqref{actualkzerop}$. Then for any $M>0$, we have  
	$$\sum_{p\leq X}C_2(p)=C \pi(X)\left(1+O\left(\frac{1}{(\log X)^M}\right)\right),$$
	where $C$ is given by
		\begin{align*}
		C:=&\prod_{\ell}\left(1-\frac{(2\ell^4+3\ell^3)(\ell-2)-(\ell-1)(\ell^4-2\ell^3-4\ell^2+1)}{(\ell-1)(\ell^2-1)^3}\right).
		\end{align*}
\end{thrm}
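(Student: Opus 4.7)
The plan is to expand the Euler product defining $C_2(p)$ via Möbius inversion, apply Siegel--Walfisz to each short-modulus contribution, and control the tail by absolute convergence of the Euler product. Each factor has the form $1 - a_\ell(p)$ with
\begin{equation*}
a_\ell(p) := \frac{(2\ell^4+3\ell^3)\bigl(\tfrac{p-1}{\ell}\bigr)^2 + \ell^3\bigl(\tfrac{p}{\ell}\bigr) - \ell^4 + 2\ell^3 + 4\ell^2 - 1}{(\ell^2-1)^3} = O(\ell^{-2}),
\end{equation*}
uniformly in $p$.

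First I truncate. Set $Y := (\log X)^{2M+2}$ and let $C_2^{(Y)}(p)$, $C^{(Y)}$ denote the partial products over $\ell \leq Y$. Absolute convergence gives $C_2(p) = C_2^{(Y)}(p)(1+O(Y^{-1}))$ and $C = C^{(Y)}(1+O(Y^{-1}))$, so it suffices to prove that $\sum_{p\leq X} C_2^{(Y)}(p) = C^{(Y)}\pi(X)(1+O((\log X)^{-M}))$. Writing $q_Y := \prod_{2 < \ell \leq Y}\ell$, the Möbius expansion reads
\begin{equation*}
C_2^{(Y)}(p) = \tfrac{4}{9}\sum_{\substack{d\mid q_Y\\ d\text{ squarefree}}}\mu(d)\,A_d(p), \qquad A_d(p) := \prod_{\ell\mid d} a_\ell(p),
\end{equation*}
and each $A_d$ depends only on $p \bmod d$.

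Next I split the outer sum at $d = D := (\log X)^{M+1}$. The pointwise bound $|A_d(p)| \ll K^{\omega(d)}d^{-2} \ll d^{\epsilon-2}$ shows that the tail $d > D$ contributes $\ll \pi(X)D^{\epsilon-1} \ll \pi(X)(\log X)^{-M}$, and the identical estimate handles the tail of the analogous expansion of $C^{(Y)}$. For $d \leq D$, I expand $A_d$ in Dirichlet characters modulo $d$: since $\bigl(\tfrac{a-1}{\ell}\bigr)^2 = 1 - \mathbf{1}[a\equiv 1\bmod \ell]$ and $\bigl(\tfrac{a}{\ell}\bigr)$ is the Legendre character modulo $\ell$, the sum of the absolute values of the Fourier coefficients of $A_d$ is $\ll K^{\omega(d)}/d^2$. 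Since $d \leq (\log X)^{M+1}$ lies below a fixed power of $\log X$, Siegel--Walfisz applies uniformly and gives
\begin{equation*}
\sum_{p\leq X}A_d(p) = \pi(X)\,\mathbb{E}_{a\in(\ZZ/d\ZZ)^*}A_d(a) + O_M(X\exp(-c_M\sqrt{\log X})),
\end{equation*}
with the cumulative Siegel--Walfisz error (summed over $d \leq D$) comfortably absorbed.

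Finally I recombine. By the Chinese remainder theorem, $\mathbb{E}_a A_d(a) = \prod_{\ell\mid d}\bar a_\ell$, where $\bar a_\ell := \mathbb{E}_{a\in(\ZZ/\ell\ZZ)^*}a_\ell(a)$. Using the local averages $\mathbb{E}_a\bigl(\tfrac{a-1}{\ell}\bigr)^2 = (\ell-2)/(\ell-1)$ (the Legendre square equals $1$ except when $a \equiv 1 \bmod \ell$) and $\mathbb{E}_a\bigl(\tfrac{a}{\ell}\bigr) = 0$ (orthogonality), direct substitution shows that $1-\bar a_\ell$ is precisely the $\ell$-th Euler factor of $C$. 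Summing $\mu(d)\prod_{\ell\mid d}\bar a_\ell$ over all squarefree $d\mid q_Y$ recombines to $\prod_{\ell\leq Y}(1-\bar a_\ell) = \tfrac{9}{4}C^{(Y)}$, up to the same tail $O((\log X)^{-M})$. The main obstacle is balancing the two truncation parameters $Y$ and $D$ so that the Euler-product tail, the Möbius-expansion tail, and the accumulated Siegel--Walfisz error all fall below $\pi(X)(\log X)^{-M}$; the Siegel--Walfisz saving $\exp(c_M\sqrt{\log X})$ is very generous, but it forces the constraint $D \leq (\log X)^{M+1}$, which in turn dictates the admissible range for $Y$.
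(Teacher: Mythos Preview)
Your proof is correct and follows essentially the same route as the paper's: expand the Euler product as an alternating sum over squarefree moduli, truncate at a power of $\log X$, apply Siegel--Walfisz to each small-modulus term, and use the local averages $\mathbb{E}_a\bigl(\tfrac{a-1}{\ell}\bigr)^2=(\ell-2)/(\ell-1)$ and $\mathbb{E}_a\bigl(\tfrac{a}{\ell}\bigr)=0$ to recombine into the Euler product for $C$. The only cosmetic differences are that the paper truncates once (the M\"obius sum at $(\log X)^M$) rather than twice (product at $Y$, then sum at $D$), and it splits each local factor explicitly via a factorisation $n=n_1n_2n_3$ instead of phrasing things in terms of Dirichlet characters.
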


\begin{proof}
First recall the function 	\begin{align*}
C_2(p):=&\frac{4}{9}\prod_{\ell>2}\Bigg(1-\frac{(2\ell^4+3\ell^3)\left(\frac{p-1}{\ell}\right)^2+\ell^3\left(\frac{p}{\ell}\right)-\ell^4+2\ell^3+4\ell^2-1}{(\ell^2-1)^3}\Bigg).
\end{align*}
	For each prime $p$, we define the function $f_p(\ell)$ by 
	$$f_p(\ell):=(2\ell^4+3\ell^3)\left(\frac{p-1}{\ell}\right)^2+\ell^3\left(\frac{p}{\ell}\right)-\ell^4+2\ell^3+4\ell^2-1.$$
	
	Then 
	\begin{align}
	\sum_{p\le X}C_2(p)&=\sum_{p\le X}\sum_{\substack{n\geq 1 \\n \text{ odd}}}(-1)^{\omega(n)}\frac{\mu^2(n)}{n^2}\prod_{\ell\mid n}\left(\frac{f_p(\ell)\ell^2}{(\ell^2-1)^3}\right)\label{eq_6}
	\end{align}
	where $\omega(n)$ is the number of distinct prime factors of $n$ and $\mu(n)$ is the M\"{o}bius function.
	In $\eqref{eq_6}$ we restrict the inner sum to integers $n\le (\log X)^M$, which gives rise to an error term of size  $ X/(\log X)^B$. Thus, the main term in (\ref{eq_6}) becomes
	\begin{align}
	\sum_{\substack{ 1\leq n\leq (\log X)^M \\ n \text{ odd}}}(-1)^{\omega(n)}\mu^2(n)\prod_{\ell\mid n}\frac{1}{(\ell^2-1)^3}\sum_{p\le X}\prod_{\ell\mid n}f_p(\ell)\label{eq_5}.
	\end{align}
	Now we define three multiplicative functions $a(\cdot)$, $b(\cdot)$, and $c(\cdot)$, supported on square-free integers by \begin{align}
	a(\ell)=2\ell^4+3\ell^3, \quad b(\ell)=\ell^3, \quad c(\ell)=2\ell^3-\ell^4+4\ell^2-1.\label{eq_7}
	\end{align}  
	In other words, $f_p(\ell)=a(\ell)\left(\frac{p-1}{\ell}\right)^2+b(\ell)\left(\frac{p}{\ell}\right)+c(\ell)$ if $\ell$ is prime.
	For every odd integer $n$, we have
	\begin{align}
	\sum_{p\le X}\prod_{\ell\mid n}f_p(\ell)&=\sum_{p\le  X}\sum_{\substack{n_1n_2n_3=n\\ (n_1,n_2)=(n_2,n_3)=(n_3,n_1)=1}}a(n_1)\left(\frac{p-1}{n_1}\right)^2b(n_2)\left(\frac{p}{n_2}\right)c(n_3)\nonumber\\
	&=\sum_{\substack{n_1n_2n_3=n\\ (n_1,n_2)=(n_2,n_3)=(n_3,n_1)=1}}a(n_1)b(n_2)c(n_3)\sum_{\underset{(p-1,n_1)=1}{p\le X}}\left(\frac{p}{n_2}\right)\nonumber\\
	&=\sum_{\substack{n_1n_2n_3=n\\ (n_1,n_2)=(n_2,n_3)=(n_3,n_1)=1}}a(n_1)b(n_2)c(n_3)\sum_{b\in (\Z/n_2\Z)^*}\left(\frac{b}{n_2}\right)\sum_{\substack{p\leq X \\ p\equiv b \imod{n_2}\\(p-1,n_1)=1}}1.\label{eq_8}
	\end{align}
	We now consider the multiplicative function $$\delta(m):=\#\{ 1\le a\le m-1: \, (a,m)=(a+1,m)=1\}.$$ Then, using the Siegel--Walfisz theorem for modulus  $n_1n_2\le (\log X)^M$, the last sum in $\eqref{eq_8}$ is equal to $\frac{\delta(n_1)\text{Li}(X)}{\phi(n_1n_2)}$ with an error term $E(X)$, bounded by $X\cdot\exp(-c(\log X)^{\frac{1}{2}})$ for some constant $c$. Note that since $n \leq (\log X)^M$, the error term  $E(X)$ does not affect the size of the error term in the statement of the theorem.\par
	
	Finally, the main term in (\ref{eq_8}) is then equal $$\text{Li}(X)\sum_{\substack{n_1n_2n_3=n \\ (n_1,n_2)=(n_2,n_3)=(n_3,n_1)=1}}\frac{\delta(n_1)a(n_1)b(n_2)c(n_3)}{\phi(n_1n_2)}\sum_{b\in (\Z/n_2\Z)^*}\left(\frac{b}{n_2}\right).$$
	We have that the last sum in the above expression is zero if $n_2\ge 3$. Also since $n$ is odd, the only contribution comes from the trivial case $n_2=1$.\par
	In that case, (\ref{eq_5}) becomes
	\begin{equation}
	\label{appendix-delta}
		\pi(X)\sum_{\substack{n\geq 1 \\ n \text{ odd}}}(-1)^{\omega(n)}\mu^2(n)\prod_{\ell\mid n}\frac{1}{(\ell^2-1)^3}\sum_{\substack{n_1n_3=n \\(n_3,n_1)=1}}\frac{\delta(n_1)a(n_1)c(n_3)}{\phi(n_1)}+O\left(\frac{X}{(\log X)^{M+1}}\right).
	\end{equation}
	We have that the function inside the outer sum in the main term in $\eqref{appendix-delta}$ is multiplicative in $n$, and hence can be written as the Euler product
	$$\prod_{\ell>2}\left(1-\frac{(a(\ell)\delta(\ell)/\phi(\ell))+c(\ell)}{(\ell^2-1)^3}\right).$$
	Then the result follows from the fact that $\delta(\ell)=(\ell-2)$. This completes the proof.
	
\end{proof}


\begin{thebibliography}{10}
\bibitem[CDKS]{CDKS:1}
\newblock V. Chandee, C. David, D. Koukoulopoulos and E. Smith,
\newblock The frequency of elliptic curves over prime finite fields.
\newblock preprint, arXiv:1405.6923.

\bibitem[Da]{HD:1}
\newblock H. Davenport,
\newblock \textit{Multiplicative Number Theory}. Third edition. Revised and with a preface by Hugh L. Montgomery. \textit{Graduate Texts in Mathematics}, 74,
\newblock Springer-Verlag, New York, 2000.

\bibitem[DKS]{DKS:1}
\newblock C. David, D. Koukoulopoulos and E. Smith,
\newblock Sums of Euler products and statistics of elliptic curves
\newblock preprint.

\bibitem[DP]{CDFP:1}
\newblock C. David and F. Pappalardi,
\newblock Average Frobenius distributions of elliptic curves.
\newblock \textit{Internat. Math. Res. Notices} 1999, no. 4, 165--183.

\bibitem[DS1]{CDES:1}
\newblock C. David and E. Smith,
\newblock Elliptic curves with a given number of points over finite fields.
\newblock \textit{Compos. Math.} 149 (2013), no. 2, 175--203.

\bibitem[DS2]{CDES:2}
\newblock C. David and E. Smith,
\newblock Corrigendum to: Elliptic curves with a given number of points over finite fields.
\newblock \textit{Compos. Math.} 150 (2014), no. 8, 1347--1348.

\bibitem[De]{MD:1}
\newblock M. Deuring,
\newblock Die Typen der Multiplikatorenringe elliptischer Funktionenk\"{o}rper.
\newblock \textit{Abh. Math. Sem. Univ. Hamburg}, 14 (1941), no. 1, 197--272.

\bibitem[E]{PE:1}
\newblock P. Elliott, 
\newblock On the size of $L(1, \chi)$, 
\newblock \textit{J. reine angew. Math.} 236 (1969), 26–36.

\bibitem[FM]{EFMM:1}
\newblock E. Fouvry and M. R. Murty,
\newblock On the distribution of supersingular primes.
\newblock \textit{Canad. J. Math.} 48 (1996), no. 1, 81--104.

\bibitem[G]{EG:1}
\newblock E. Gekeler,
\newblock Frobenius distributions of elliptic curves over finite prime fields.
\newblock \textit{Int. Math. Res. Not.}, 2003, no. 37, 1999--2018.

\bibitem[GS]{AGKS:1}
\newblock A. Granville and K. Soundararajan,
\newblock The distribution of values of $L(1, \chi_d)$.
\newblock \textit{Geom. Funct. Anal.}, 13 (2003), no. 5, 992--1028.

\bibitem[J]{NJ:1}
\newblock N. Jones,
\newblock Elliptic aliquot cycles of fixed length.
\newblock \textit{Pacific J. Math.} 263 (2013), no. 2, 353--371.

\bibitem[K]{DK:1}
\newblock D. Koukoulopoulos,
\newblock Primes in short arithmetic progressions.
\newblock \textit{Int. J. Number Theory} (to appear).

\bibitem[LT]{SLHT:1}
\newblock S. Lang and H. Trotter,
\newblock \textit{Frobenius distributions in $GL_2$-extensions}. Lecture Notes in Mathemtics, Vol. 504.
\newblock Springer-Verlag, Berlin-New York, 1976. 

\bibitem[L]{HL:1}
\newblock H. Lenstra,
\newblock Factoring integers with elliptic curves.
\newblock \textit{Ann. of Math.} (2) 126 (1987), no. 3, 649--673.

\bibitem[P]{JP:1}
\newblock J. Parks,
\newblock Amicable pairs and aliquot cycles on average.
\newblock \textit{Int. J. Number Theory} 11 (2015), no. 6, 1751--1790.

\bibitem[Se]{JPS:1}
\newblock J-P. Serre,
\newblock Propri\'et\'es galoisiennes des points d'ordre fini des courbes elliptiques,
\newblock \textit{Invent. Math.} 15 (1972), 259--331.

\bibitem[Si]{JS:1}
\newblock J. Silverman,
\newblock \textit{The arithmetic of elliptic curves}, \textit{Graduate Texts in Mathematics}, 106
\newblock Springer-Verlag, New York, 1986.

\bibitem[SS]{JSKS:1}
\newblock J. Silverman and K. Stange,
\newblock Amicable pairs and aliquot cycles for elliptic curves.
\newblock \textit{Exp. Math.}  20 (2011), no. 3, 329--357.

\bibitem[Sm]{CS:1}
\newblock C. Smyth,
\newblock The terms in Lucas sequences divisible by their indices.
\newblock \textit{J. Integer Seq.} 13 (2010), no. 2, Article 10.2.4, 18 pp.
\end{thebibliography}
\end{document}